\documentclass[a4paper,10pt]{article}
\usepackage[utf8]{inputenc}
\usepackage{amsmath,amssymb,mathrsfs,graphicx,amsthm,dsfont}
\usepackage{xfrac}

\date{Final version to appear in \\[2pt]
\emph{Chaos, Solitons \& Fractals} \\[3pt] 
Volume 111, June 2018, Pages 55--61
}

%opening
\title{Global-local mixing for the Boole map}
\author{
%\scshape
Claudio Bonanno\,\thanks{
Dipartimento di Matematica, Universit\`a di Pisa, Largo Bruno 
Pontecorvo 5, 56127 Pisa, Italy. E-mail: 
\texttt{claudio.bonanno@unipi.it}.}
,
Paolo Giulietti\,\thanks{
Scuola Normale Superiore - Centro di Ricerca Matematica Ennio 
De Giorgi, Piazza dei Cavalieri 7, 56126 Pisa, Italy.
E-mail: \texttt{paologiulietti.math@gmail.com}.}
,
Marco Lenci\,\thanks{
Dipartimento di Matematica, Universit\`a di Bologna,
Piazza di Porta San Donato 5, 40126 Bologna, Italy. 
E-mail: \texttt{marco.lenci@unibo.it}.}
\thanks{
Istituto Nazionale di Fisica Nucleare,
Sezione di Bologna, Via Irnerio 46,
40126 Bologna, Italy.}
}

\newtheorem{theorem}{Theorem}
\newtheorem{lemma}[theorem]{Lemma}
\newtheorem{proposition}[theorem]{Proposition}
\newtheorem{corollary}[theorem]{Corollary}
\newtheorem{definition}[theorem]{Definition}

\newtheorem{remark}[theorem]{Remark}

\newcommand{\fn} {function}
\newcommand{\me} {measure}

\newcommand{\erg} {ergodic}
\newcommand{\sy} {system}

\newcommand{\pr} {probability}
\newcommand{\dsy} {dynamical system}

\newcommand{\m} {mixing}
\newcommand{\ob} {observable}

\newcommand{\bR}{{\mathbb R}}
\newcommand{\bN}{{\mathbb N}}
\newcommand{\bP}{{\mathbb P}}
\newcommand{\bZ}{{\mathbb Z}}
\newcommand{\bC}{{\mathbb C}}

\newcommand{\ds} {\displaystyle}

\newcommand{\avg} {\overline{m}}

\begin{document}

\maketitle

\begin{abstract}
In the context of `infinite-volume \m' we prove global-local \m\ for the 
Boole map, a.k.a.\ Boole transformation, which is the prototype of a 
non-uniformly expanding map with two neutral fixed points. Global-local 
\m\ amounts to the decorrelation of all pairs of global and local observables.
In terms of the equilibrium properties of the system it means that 
the evolution of every absolutely continuous probability measure 
converges, in a certain precise sense, to an averaging functional over the 
entire space.

\bigskip\noindent 
Mathematics Subject Classification (2010): 37A40, 37A25, 37D25, 
37C25.
\end{abstract}

\section{Introduction}

In 1857 G.~Boole \cite{Boole1857} proved that 
\begin{equation} \label{eq:origboole}
  \int_{-\infty}^{\infty} f(x) \, dx = \int_{-\infty}^{\infty} f \! \left(x - \frac{1}{x}
  \right) dx ,
\end{equation}
whenever one of the two sides makes sense. He obtained this result by 
investigating changes of variables of the type $x - \frac{\lambda_1}{x - 
\lambda_2}$, for $\lambda_1, \lambda_2 \in \mathbb{R}$, which were 
extremely helpful in evaluating indefinite integrals, especially when 
complex contour integration was no trivial matter. In this note we 
look at some properties of the Boole map $T:\mathbb{R} \to \mathbb{R}$ 
where 
\[
T(x) := x - \frac{1}{x} .
\]
The graph of $T$ is shown in Fig.~\ref{fig-boole-map} below.

Eq.~(\ref{eq:origboole}) is equivalent to the property that, for every 
measurable $A \subseteq \bR$, $m(T^{-1} A) = m(A)$, where $m$
is the Lebesgue measure on $\bR$. In the language of \dsy s, we say 
that $m$ is invariant for $T$. This fact could also be ascertained via a 
simple direct computation for all $A = [a,b]$, which is no loss of 
generality.

Since $m$ is infinite, we are in the scope of infinite \erg\ theory. A 
number of techniques and results in this field have been given for
infinite-\me-preserving expanding maps of the unit interval with 
neutral fixed points, usually by means of suitable induction schemes; 
cf.~\cite{Aaronson97, ds, Zweimuller09}. It is not hard to represent $T$ 
in this fashion: for example, the conjugation $\psi:(0,1) \to \bR$ defined by 
\[
\psi(y) = \frac{1}{1-y} - \frac{1}{y} 
\]
gives rise to a two-branched expanding map $\bar{T} := 
\psi^{-1} \circ T \circ \psi :  (0,1) \to (0,1)$ which has neutral fixed 
points at $y=0$ and $y=1$. For this and further developments 
based on this approach see the lecture notes \cite{Zweimuller09}.

In this note we are interested in a mixing property of the Boole map which 
involves \emph{global \ob s} \cite{limix}: for this it is easier to view the 
system as a Lebesgue-\me-preserving map of $\bR$.

A global \ob, roughly speaking, is a function that is supported more or 
less evenly over the phase space, thus representing the observation of
an ``extensive'' quantity in space, as opposed to a \emph{local \ob}, 
which represents a quantity that is only relevant in a confined portion of 
the space. We show that global and local \ob s decorrelate in time, a 
property called \emph{global-local \m} \cite{limix}, see Definition 
\ref{def:GLM} below. This property provides interesting information on the 
stochastic properties of the \sy, as we will see.

Global-local \m\ has been proved for other systems as well, for example 
\dsy s representing random walks \cite{limix}, certain uniformly 
expanding Markov maps on $\bR$ \cite{lmmaps} and maps with one 
neutral fixed point \cite{BGL17}.  In \cite{BGL17} we claimed that the 
technique presented there was very flexible and could be adapted to 
a variety of different cases. Moreover, when presenting our results,
we were asked whether they held for the Boole map too, which is an 
important example and also the prototype of an expanding map with 
more than one neutral fixed point.

In what follows we give a complete proof of global-local \m\ for the 
Boole map and highlight some of its applications. This gives the 
interested reader a chance to see the techniques presented in 
\cite{BGL17} at work in a specific, relatively simple, case. 
Furthermore, we require a technical lemma that was stated with 
no proof in \cite[Remark 2.14]{BGL17}. We take the chance to 
present its proof --- in fact, a generalization thereof --- in the 
Appendix of this note.

\bigskip
\noindent
\textbf{Acknowledgments.}\ C.B.\ was partially supported by 
PRA2017, Universit\`a di Pisa, \emph{``Sistemi dinamici in analisi, 
geometria, logica e meccanica celeste''}. P.G.\ was partially 
supported by Instituto de Matem\'atica, Universidade Federal do 
Rio Grande do Sul, Porto Alegre, RS, Brazil. This research is part 
of the authors' activity within the DinAmicI community, see 
\texttt{www.dinamici.org}.

\section{Results}  

Let us recall that a \dsy\ $(M, \mathcal{B}, \mu, T)$, where 
$(M, \mathcal{B}, \mu)$ is a $\sigma$-finite \me\ space and $T$ is a
bi-measurable self-map of $M$, is called \emph{non-singular} if 
$\mu(A) = 0$ implies $\mu(T^{-1 }A) = 0$, for all $A \in \mathcal{B}$.  
Clearly, a \me-preserving \sy\ is non-singular.

An \erg\ property that is needed in our arguments later on is exactness:

\begin{definition} 
  The non-singular \dsy\ $(M, \mathcal{B}, \mu, T)$ is called \emph{exact} if
  the \emph{tail $\sigma$-algebra} $\bigcap_{n=0}^{\infty} T^{-n} \mathcal{B}$
  contains only null sets and complements of null sets, w.r.t.\ $\mu$.
\end{definition}

As is the case of many expanding maps with the right ``kneading''
properties, the Boole map $T$ is exact 
\cite[Exercise 1.3.4(6)]{Aaronson97}.

A well-established powerful tool to study the stochastic properties of
a map $T$ is the transfer operator $P$. If one chooses $\mu$ as the 
reference \me, $P$ is defined implicitly by the identity
\[
  \int_M  (F \circ T)\, g\, d\mu = \int_M F\, (P g)\, d\mu,
\]
where $F \in L^\infty(\mu)$ and $g \in L^1(\mu)$. One can use $P$ to 
give a useful criterion for exactness.

\begin{theorem}[Lin \cite{Lin71}] \label{thm-lin}
  A non-singular \dsy\ is exact if and only if for all $g \in L^1(\mu)$ such 
  that $\mu(g) = 0$ we have $\ds \lim_{n \to \infty} \| P^n g \|_1 = 0 $. 
\end{theorem}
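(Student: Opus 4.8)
The plan is to prove the two implications separately, the non-trivial one being that exactness forces the decay. I will use throughout that $P^n$ is the transfer operator of $T^n$, that it is a positive linear contraction of $L^1(\mu)$ which preserves the integral — taking $F\equiv 1$ in the defining identity gives $\mu(P^n g)=\mu(g)$, and $|P^n g|\le P^n|g|$ then gives $\|P^n g\|_1\le\|g\|_1$ — and that its Banach-space adjoint on $L^\infty(\mu)$ is the Koopman operator $U^n F:=F\circ T^n$. In particular $n\mapsto\|P^n g\|_1$ is non-increasing, so $\lim_n\|P^n g\|_1$ always exists; the content of the theorem is that under exactness it vanishes whenever $\mu(g)=0$.

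Assume first that $\|P^n g\|_1\to 0$ for every $g\in L^1(\mu)$ with $\mu(g)=0$, and let $A\in\bigcap_{n\ge 0}T^{-n}\mathcal B$, so that for each $n$ there is $A_n\in\mathcal B$ with $\mathbf 1_A=\mathbf 1_{A_n}\circ T^n$ $\mu$-a.e. For any $g\in L^1(\mu)$ the adjoint identity gives $\int_A g\,d\mu=\int_M(\mathbf 1_{A_n}\circ T^n)\,g\,d\mu=\int_{A_n}P^n g\,d\mu$, hence $\bigl|\int_A g\,d\mu\bigr|\le\|P^n g\|_1$; if $\mu(g)=0$ this forces $\int_A g\,d\mu=0$. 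Were both $\mu(A)>0$ and $\mu(A^c)>0$, $\sigma$-finiteness would let us choose $B\subseteq A$ and $C\subseteq A^c$ with $0<\mu(B),\mu(C)<\infty$ and apply the above to the mean-zero function $g=\mu(B)^{-1}\mathbf 1_B-\mu(C)^{-1}\mathbf 1_C\in L^1(\mu)$, which yields the absurdity $1=\int_A g\,d\mu=0$. Hence $\mu(A)=0$ or $\mu(A^c)=0$, i.e.\ the system is exact.

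Conversely, assume exactness. The idea is that $U^n P^n$ — or, when $T$ does not preserve $\mu$, its density-weighted variant — implements the conditional expectation onto the decreasing family $T^{-n}\mathcal B$, whose intersection is $\mu$-trivial by exactness, so that reverse-martingale convergence ought to drain all the $\mu$-mass of $P^n g$ as $n\to\infty$. To make this rigorous I would first pass to an equivalent probability measure $\tilde\mu=h\,d\mu$: exactness is an invariant of the measure class, and the transfer operator $\tilde P$ of $T$ with respect to $\tilde\mu$ is conjugate to $P$ by $\tilde P g=h^{-1}P(hg)$, so that $\|\tilde P^n g\|_{L^1(\tilde\mu)}=\|P^n(hg)\|_{L^1(\mu)}$ and $\tilde\mu(g)=\mu(hg)$; since $g\mapsto hg$ is a bijection of $L^1(\tilde\mu)$ onto $L^1(\mu)$ it suffices to prove the statement when the reference measure is a probability measure. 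In that case one has $\mathbb E[g\mid T^{-n}\mathcal B]=(P^n g/P^n 1)\circ T^n$ with $P^n 1=d(\mu\circ T^{-n})/d\mu>0$ $\mu$-a.e.\ (exactness entails conservativity), and a change of variables gives $\|\mathbb E[g\mid T^{-n}\mathcal B]\|_1=\|P^n g\|_1$. Finally $(\mathbb E[g\mid T^{-n}\mathcal B])_n$ is a reverse martingale, hence converges in $L^1$ to $\mathbb E\bigl[g\mid\bigcap_{n}T^{-n}\mathcal B\bigr]$, which by exactness is the constant $\mu(g)=0$; therefore $\|P^n g\|_1\to 0$.

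The step I expect to require the most care is exactly this chain of reductions. In infinite measure the tail $\sigma$-algebra is, under exactness, not $\mu$-$\sigma$-finite, which is why the reverse-martingale convergence theorem cannot be applied to $P^n g$ directly and the detour through a finite equivalent measure is needed; but that detour trades the (possibly) measure-preserving $T$ for a merely non-singular one, so one must carry the Radon--Nikodym cocycle $P^n 1$ through the conditional-expectation identity and the change of variables and verify $P^n 1>0$ $\mu$-a.e. None of this is deep, but it is where a proof can slip; a reader who prefers to cite rather than reprove may instead invoke Lin's original, purely functional-analytic argument \cite{Lin71}, in which the closed, $P$-invariant subspace $\mathcal D:=\{g\in L^1(\mu):\mu(g)=0,\ \|P^n g\|_1\to 0\}$ is shown to exhaust the mean-zero functions of $L^1(\mu)$, the only alternative being ruled out by Hahn--Banach together with exactness.
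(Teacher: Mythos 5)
Your proof is correct, but be aware that the paper does not prove this statement at all: it is quoted as Lin's theorem and used as a black box via the citation \cite{Lin71}, so there is no internal argument to compare yours with. What you supply is the standard martingale proof of Lin's criterion rather than Lin's original operator-theoretic one. The easy implication (decay of $\|P^n g\|_1$ on the mean-zero subspace forces triviality of $\bigcap_n T^{-n}\mathcal B$) is exactly right, and the $\sigma$-finiteness trick with $g=\mu(B)^{-1}\mathbf 1_B-\mu(C)^{-1}\mathbf 1_C$ is the correct way to manufacture a mean-zero test function in infinite measure. For the converse, your chain of reductions is sound: exactness and the quantities $\|P^n(hg)\|_1$ are both invariants of the measure class under the conjugation $\tilde P(\cdot)=h^{-1}P(h\,\cdot)$; the identity $\mathbb E_{\tilde\mu}[g\mid T^{-n}\mathcal B]=(\tilde P^n g/\tilde P^n 1)\circ T^n$ holds for a non-singular map of a probability space; and the L\'evy downward (reverse-martingale) theorem together with triviality of the tail finishes the argument. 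The one step I would write out more carefully is the norm identity $\|\mathbb E[g\mid T^{-n}\mathcal B]\|_{L^1(\tilde\mu)}=\|\tilde P^n g\|_{L^1(\tilde\mu)}$ on the set where $\tilde P^n 1=0$: rather than invoking conservativity, note that if $\mathbb E[g\mid T^{-n}\mathcal B]=G\circ T^n$ then $\tilde P^n g=G\cdot\tilde P^n 1$ and $\int|G\circ T^n|\,d\tilde\mu=\int|G|\,\tilde P^n 1\,d\tilde\mu=\|\tilde P^n g\|_1$, which requires no positivity assumption. Compared with Lin's functional-analytic argument (the closed $P$-invariant subspace plus Hahn--Banach), your route is more probabilistic and arguably more transparent, at the cost of the detour through an equivalent probability measure; either is acceptable, and for the purposes of this paper the citation alone suffices.
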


\begin{corollary} \label{cor:lpmu}
  Suppose that $T$ is non-singular and exact and $\mu(M) = \infty$. Then, 
  for all $A, B$ with $\mu(A), \mu(B) < \infty$, $\ds \lim_{n \to \infty} 
  \mu(T^{-n}A \cap B) = 0$.
\end{corollary}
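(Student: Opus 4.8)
The plan is to deduce the statement directly from Lin's criterion, Theorem~\ref{thm-lin}. The first step is to express the quantity of interest through the transfer operator: since $\mu(A),\mu(B)<\infty$ we have $\Id_A\in L^\infty(\mu)$ and $\Id_B\in L^1(\mu)$, so iterating the defining identity of $P$ yields
\[
  \mu(T^{-n}A\cap B)=\int_M (\Id_A\circ T^n)\,\Id_B\,d\mu=\int_A P^n\Id_B\,d\mu .
\]
Hence it suffices to prove $\int_A P^n\Id_B\,d\mu\to0$. Were $\Id_B$ of zero $\mu$-integral this would be immediate, because exactness and Theorem~\ref{thm-lin} give $\|P^n\Id_B\|_1\to0$; the only real issue is that $\mu(B)>0$, so one has to split off a genuine mean-zero part, and this is precisely where the hypothesis $\mu(M)=\infty$ enters.

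Fix $\eps>0$. Since $\mu$ is $\sigma$-finite with $\mu(M)=\infty$, choose a measurable set $C$ with $\mu(A)\mu(B)/\eps\le\mu(C)<\infty$, and set $g:=\Id_B-\tfrac{\mu(B)}{\mu(C)}\,\Id_C$. Then $g\in L^1(\mu)$ and $\mu(g)=0$, so exactness and Theorem~\ref{thm-lin} give $\|P^ng\|_1\to0$. By linearity of $P$,
\[
  \int_A P^n\Id_B\,d\mu=\int_A P^ng\,d\mu+\frac{\mu(B)}{\mu(C)}\int_A P^n\Id_C\,d\mu ;
\]
the first term is bounded in absolute value by $\|P^ng\|_1\to0$, while the second equals $\tfrac{\mu(B)}{\mu(C)}\,\mu(T^{-n}A\cap C)\le \mu(A)\mu(B)/\mu(C)\le\eps$, using that $\mu(T^{-n}A\cap C)\le\mu(T^{-n}A)=\mu(A)$ because $\mu$ is $T$-invariant. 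Therefore $\limsup_{n\to\infty}\mu(T^{-n}A\cap B)\le\eps$, and letting $\eps\to0$ proves the corollary.

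I do not expect a genuine obstacle: the argument is a bookkeeping identity for $P^n$, a single application of Lin's theorem, and an elementary bound on the residual term. The only point that repays a moment's care is the choice of the ``counterweight'' set $C$: it must carry large but finite mass, which is exactly what $\sigma$-finiteness together with $\mu(M)=\infty$ provides, and the crude estimate $\mu(T^{-n}A\cap C)\le\mu(A)$, which relies on the invariance of $\mu$ --- for the Boole map this is Boole's identity~(\ref{eq:origboole}).
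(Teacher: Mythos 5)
Your proof is correct, and it is essentially the argument the paper has in mind: the paper gives no written proof, merely pointing to Lemma~\ref{lem-main-pmu} and to [lpmu, Theorem~3.5(b)], and both of those rest on exactly your device --- split the density into a mean-zero part, which Lin's criterion (Theorem~\ref{thm-lin}) kills in $L^1$, plus a remainder made small by diluting it over a set $C$ of large finite measure. All the individual steps check out: the identity $\mu(T^{-n}A\cap B)=\int_A P^n\Id_B\,d\mu$, the existence of $C$ (from $\sigma$-finiteness plus $\mu(M)=\infty$), and the bound $\mu(T^{-n}A\cap C)\le\mu(T^{-n}A)=\mu(A)$.

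One point you rightly flag deserves emphasis: your last estimate uses the $T$-invariance of $\mu$, which is \emph{not} among the stated hypotheses of the corollary (only non-singularity and exactness are assumed). This is not a defect of your proof but of the statement: some hypothesis of this kind is genuinely needed. For instance, take $M=\{1,2,3,\dots\}$ with counting measure and $T(1)=1$, $T(k)=\lfloor k/2\rfloor$ for $k\ge 2$; this map is non-singular and exact (the fibres $T^{-n}\{1\}=\{1,\dots,2^{n+1}-1\}$ exhaust $M$, so the tail $\sigma$-algebra is trivial), yet $\mu(T^{-n}\{1\}\cap\{1\})=1$ for all $n$. So the corollary fails for merely non-singular exact maps, and invariance (which holds for the Boole map, and which is assumed in [lpmu]) is exactly the missing ingredient your argument supplies.
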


Corollary \ref{cor:lpmu} is not hard to show, using Lemma 
\ref{lem-main-pmu} below. In any case, it is a direct consequence
of \cite[Theorem 3.5(b)]{lpmu}. It is important because it shows that, at
least for exact systems, a naive transposition of the classical definition
of \m\ to the infinite-\me\ setting does not make sense. Actually, the 
conclusion of Corollary \ref{cor:lpmu} holds for a much larger class of \sy s, 
which A.~B.~Hajian and S.~Kakutani called \emph{of zero-type} \cite{hk}; 
cf.\ \cite{ds} for a ``modern'' version of the original definition of \cite{hk}.

Coming back to the Boole map $T: \bR \to \bR$ and the Lebesgue \me\
$m$, we now give the precise definitions of global and local \ob.

\begin{definition}
  A \emph{global \ob} is any $F \in L^\infty(m)$ for which the limit
  \begin{equation} \label{def-avg}
    \avg(F) := \lim_{a \to +\infty} \, \frac1{2a} \int_{-a}^a F \, dm 
  \end{equation}
  exists; $\avg(F) $ is called the \emph{infinite-volume average} 
  of $F$.
\end{definition}

\begin{definition}
  A \emph{local \ob} is any $g \in L^1(m)$. 
\end{definition}

For the sake of readability, global and local observables are indicated, 
respectively, with uppercase and lowercase letters. 

Good examples of global observables are the bounded periodic functions, 
say $F(x) = \sin(x)$ or
\begin{equation} \label{ex-per-fn}
  F(x) = \left\{
  \begin{array}{cl} 
    1 \,, & \mbox{ if } x \in [2n, 2n + 1) ,\ n \in \mathbb{Z} ; \\[2pt] 
    -1 \,,  & \mbox{ if } x \in [2n + 1, 2n+2) ,\ n \in \mathbb{Z} .  
  \end{array} 
  \right.
\end{equation}
In both cases $\avg(F)=0$. One can also consider \ob s that
distinguish between the two fixed points: for example, any bounded
$F: \bR \to \bC$ such that 
\begin{equation} \label{ex-two-lims}
  \lim_{x \to \pm \infty} F(x) = \ell_\pm,
\end{equation}
with $\ell_+ \ne \ell_-$. Evidently $\avg(F) = \frac12 (\ell_+ + \ell_-)$. Of 
course one can think of more complicated \ob s, for example \fn s 
that distinguish between the two fixed points, but do not have a 
limit at either of them, such as
\[
  F(x) = \frac{e^x}{e^x+1} + \cos\! \left( \frac{e^x+1}{e^x+2} \, x \right).
\]
In this particular case it can be checked that $\avg(F) = \frac12$.

Recalling the standard notation $m(f) := \int_\bR f \, dm$, the property 
we want to study for the Boole map is the following:

\begin{definition} \label{def:GLM}
  $T$ is said to be \emph{global-local \m} if, for all global \ob s $F$ 
  and local \ob s $g$,
  \[
    \lim_{n \to \infty} m( (F \circ T^n) g ) = \avg(F) m(g).
  \]
\end{definition}

There are stronger and weaker notions of mixing between global and 
local \ob s; cf.\ \cite{limix}: in recent literature \cite{lpmu, lmmaps, BGL17}
Definition \ref{def:GLM} has been indicated with the label {\bf (GLM2)}. It 
is perhaps the most natural way to reveal ``decorrelation'' between a 
global and a local \ob\ and has an interesting interpretation in terms of 
the dynamics of \me s under $T$. In fact, it can be proved that $T$ is global-local \m\ if and 
only if, for any absolutely continuous probability \me\ $\nu$ and any global 
\ob\ $F$,
\begin{equation} \label{eq:referee}
  \lim_{n \to \infty} \int_{\bR} F \, d (\nu \circ T^{-n}) = \avg(F).
\end{equation}
To see this it suffices to rewrite the above l.h.s.\ as $m((F \circ T^n)g)$, 
where $g := \frac{d\nu} {dm}$. Observe also that $m(g) = 1$. This shows 
that global-local mixing gives \eqref{eq:referee}.
The same argument proves that \eqref{eq:referee} implies the limit of 
Definition \ref{def:GLM} for all $g \ge 0$ with
$m(g)=1$. Since both sides of the limit are linear in $g$, the
conclusion is extended to all $g \ge 0$. For the general case one uses
the identity $g = g^+ - g^-$, where $g^+, g^-$ are, respectively, the
positive and negative parts of $g$. 

So the functional $\avg$, acting on the space of global \ob s, is the limit of 
the evolution of a large class of initial probability \me s.

As a simple application, consider the \ob\ $F$ of (\ref{ex-per-fn}). 
Proving that $T$ is global-local \m\ will show that for a random initial 
condition $x$, w.r.t.\ to any law $\nu \ll m$, the point $T^n(x)$ will be 
as likely to belong to an ``even'' as to an ``odd'' unit interval, for
$n \to \infty$. If we also consider the family of \ob s $F$ as in 
(\ref{ex-two-lims}), we see that the probability that $T^n(x)$ lies 
in a given neighborhood of $+\infty$, respectively $-\infty$, 
converges to $\frac12$ as $n \to \infty$.

Definition \ref{def:GLM} and its interpretation rely on the observation that $\avg$ 
is $T$-invariant on global observables. 

\begin{proposition}
  If $F$ is a global \ob, then so is $F \circ T$, with $\avg(F \circ T) =
  \avg(F)$.
\end{proposition}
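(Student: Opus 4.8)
The plan is to reduce the statement to a direct change-of-variables computation, exploiting the special structure of the Boole map $T(x) = x - 1/x$. First I would observe that for any integrable $\phi$ on a bounded symmetric interval, the measure-preservation identity $\int_{-a}^{a} (\phi\circ T)\,dm$ should be comparable, up to a controlled error, to $\int_{-a}^{a} \phi\,dm$; the point is that $T$ maps $\bR$ onto $\bR$ with $m(T^{-1}A) = m(A)$, so the only obstruction to equality of the two truncated integrals is the discrepancy between $T^{-1}[-a,a]$ and $[-a,a]$. Thus the core of the proof is to show that this discrepancy is negligible after dividing by $2a$ and sending $a\to+\infty$.

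Concretely, I would compute $T^{-1}[-a,a]$ explicitly. Solving $x - 1/x = \pm a$ gives the preimages of the endpoints: $x = \tfrac{1}{2}(\pm a \pm \sqrt{a^2+4})$, so $T^{-1}[-a,a]$ is a union of intervals whose total length is exactly $2a$ (by invariance) but which is \emph{not} $[-a,a]$: it consists of a large central interval together with a small piece near $0$ and small corrections near the endpoints $\pm a$. The key estimate is that $T^{-1}[-a,a] \symmdiff [-a,a]$ has Lebesgue measure that is $O(1)$ — in fact bounded uniformly in $a$, since the endpoint corrections are $\tfrac{1}{2}(\sqrt{a^2+4} - a) = O(1/a)$ and the piece near $0$ has length $O(1)$ as well. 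Then for $F \in L^\infty(m)$ with $\|F\|_\infty =: K$,
\[
  \left| \int_{-a}^{a} (F\circ T)\,dm - \int_{-a}^{a} F\,dm \right|
  = \left| \int_{T^{-1}[-a,a]} F\,dm - \int_{[-a,a]} F\,dm \right|
  \le K \cdot m\!\left( T^{-1}[-a,a] \symmdiff [-a,a] \right) = O(1).
\]
Here I used $m(T^{-1}[-a,a]) = m([-a,a]) = 2a$ to rewrite $\int_{-a}^{a} F\circ T\,dm = \int_{T^{-1}[-a,a]} F\,dm$. Dividing by $2a$ and letting $a\to+\infty$, the right-hand side tends to $0$, so $\frac{1}{2a}\int_{-a}^a (F\circ T)\,dm$ and $\frac{1}{2a}\int_{-a}^a F\,dm$ have the same limit; since the latter converges to $\avg(F)$ by hypothesis, the former converges to the same value, which shows both that $F\circ T$ is a global observable and that $\avg(F\circ T) = \avg(F)$.

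The main obstacle — really the only subtlety — is making the symmetric-difference estimate clean and rigorous, i.e.\ correctly identifying the connected components of $T^{-1}[-a,a]$ and checking that all the discarded/added pieces have total length bounded independently of $a$ (rather than growing like $a^\alpha$ for some $\alpha>0$, which would wreck the argument). A convenient way to organize this is to note that $T$ restricted to $(0,+\infty)$ is an increasing bijection onto $\bR$ and $T$ restricted to $(-\infty,0)$ is likewise an increasing bijection onto $\bR$ (with $T(0^\pm) = \mp\infty$), so $T^{-1}[-a,a]$ has exactly two components, one in each half-line, whose endpoints are the four explicit roots above; a short calculation with those roots gives the $O(1)$ bound. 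One should also remark that $\avg$ is clearly linear and that $\avg(\mathds{1}) = 1$, though neither is needed for this proposition itself.
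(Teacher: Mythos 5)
Your strategy is sound and is essentially the paper's own: both proofs rest on the observation that the preimage under $T$ of a long symmetric interval is, up to a set of negligible measure, a symmetric interval of the same length, so that the truncated averages of $F\circ T$ and of $F$ agree in the limit $a\to+\infty$. The paper streamlines the computation by starting from the interval $[-a+\frac1a,\,a-\frac1a]$, whose preimage is \emph{exactly} $[-a,-\frac1a]\cup[\frac1a,a]$; this avoids the roots $\frac12(\pm a\pm\sqrt{a^2+4})$ and makes the error term visibly just the short interval around the origin. Your version, starting from $[-a,a]$ and estimating the symmetric difference, works equally well (and in fact $m\bigl(T^{-1}[-a,a]\symmdiff[-a,a]\bigr)=O(1/a)$, not merely $O(1)$, since both the gap near $0$ and the endpoint corrections have length $\sqrt{a^2+4}-a=O(1/a)$).

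One step is mis-stated and should be repaired: the displayed \emph{equality} $\int_{-a}^{a}(F\circ T)\,dm=\int_{T^{-1}[-a,a]}F\,dm$ is not a valid change of variables, and it does not follow from $m(T^{-1}[-a,a])=m([-a,a])$ --- equality of the measures of two sets does not let you trade an integral of $F\circ T$ over one for an integral of $F$ over the other. The correct identity is $\int_{T^{-1}A}(F\circ T)\,dm=\int_{A}F\,dm$, obtained by applying invariance to $\Id_A\,F\in L^1(m)$. The argument then reads: replace $\int_{-a}^{a}(F\circ T)\,dm$ by $\int_{T^{-1}[-a,a]}(F\circ T)\,dm$ at a cost of at most $\|F\|_\infty\, m\bigl(T^{-1}[-a,a]\symmdiff[-a,a]\bigr)$, and the latter integral equals $\int_{-a}^{a}F\,dm$. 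This yields exactly the final inequality you wrote, so the conclusion stands; only the intermediate bookkeeping needs fixing.
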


\begin{proof}
Given $a>1$, it is easy to check that
\[
  T^{-1} \! \left[ -a+\frac1a \,,\, a-\frac1a \right] = \left[ -a \,,\, -\frac1a \right]
  \cup  \left[ \frac1a \,,\, a\right] .
\]
Using the $T$-invariance of $m$, we obtain
\[
  \int_{-a+\frac1a}^{a-\frac1a} F \, dm = \int_{-a}^{-\frac1a} F \circ T \, dm +
  \int_{\frac1a}^{a} F \circ T \, dm.
\]
Since $F$ is bounded, dividing both sides of the above identity by $2a$ 
and taking the limit $a \to +\infty$ gives the assertion.
\end{proof}

In the rest of this section we prove our main result:

\begin{theorem} \label{th:main}
  The Boole map is global-local mixing. 
\end{theorem}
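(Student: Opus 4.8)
The plan is to reduce the statement of Theorem~\ref{th:main} to a decorrelation estimate at the level of the transfer operator, via the conjugation $\bar T = \psi^{-1}\circ T\circ\psi$ that moves the dynamics onto the unit interval with two neutral fixed points. First I would recall that by the linearity argument sketched after Definition~\ref{def:GLM}, together with a standard density argument, it suffices to prove the limit $m((F\circ T^n)g)\to\avg(F)\,m(g)$ for $g$ in a convenient dense subclass of $L^1(m)$ --- for instance compactly supported bounded densities, or densities of the form $g = h\circ\psi^{-1}\cdot|(\psi^{-1})'|$ with $h$ of bounded variation on $(0,1)$, which is the natural class for the interval map $\bar T$. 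Pushing everything through $\psi$, the quantity $m((F\circ T^n)g)$ becomes $\int_0^1 (F\circ\psi\circ\bar T^n)\,h\,dy$, so one needs to control $\int_0^1 \Phi\cdot(\bar P^n h)\,dy$ where $\Phi := F\circ\psi$ and $\bar P$ is the transfer operator of $\bar T$ w.r.t.\ Lebesgue on $(0,1)$. The subtlety is that $\Phi$ is a bounded but in general only measurable function, whose behaviour near $0$ and near $1$ encodes the two ``asymptotic directions'' $\ell_-$ and $\ell_+$ of $F$, and the infinite-volume average $\avg(F)$ must emerge as a specific average of these boundary behaviours weighted by the invariant density of $\bar T$ (which is infinite and blows up at both endpoints).

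The core of the argument, following the strategy of \cite{BGL17}, is to split $\Phi$ (equivalently $F$) into a ``tail part'' supported near the two fixed points and a ``bulk part''. For the bulk part, $\Phi$ is supported away from the neutral fixed points, where $\bar T$ is genuinely expanding; there one expects $\bar P^n h$ to equidistribute in the strong sense provided by exactness and Lin's theorem (Theorem~\ref{thm-lin}), so $\int \Phi_{\mathrm{bulk}}\,(\bar P^n h)\,dy \to 0$ because, after subtracting off its average against the relevant finite reference measure, the contribution is governed by $\|\bar P^n(\text{mean-zero }L^1)\|_1\to 0$. This is essentially Corollary~\ref{cor:lpmu} in disguise: local observables (in the $T$-picture, compactly supported ones; in the $\bar T$-picture, ones vanishing near the endpoints) decorrelate from everything. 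For the tail part, one has to show that the mass of $\bar P^n h$ that has migrated into a small neighborhood of $0$ (resp.\ of $1$) ``sees'' the value $\ell_-$ (resp.\ $\ell_+$), and that the proportions of mass approaching the two fixed points, integrated against the observable, reconstruct exactly $\frac12(\ell_+ + \ell_-)$ in the limit --- and more generally $\avg(F)$ for $F$ without one-sided limits. This symmetric-split-of-mass is exactly where the two neutral fixed points of the Boole map enter, and where a quantitative control of the invariant density near the endpoints is needed.

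The main obstacle I expect is precisely this last point: making rigorous the claim that a vanishing-small fraction of the total $L^1$ mass of $\bar P^n h$ nonetheless carries a definite, non-vanishing ``weighted contribution'' when integrated against $\Phi$, because $\Phi$ near the endpoints is multiplied (in the $\bar T\to T$ translation) by a density that is not integrable. One must quantify the rate at which mass accumulates near $y=0$ and $y=1$ and match it against the rate at which $|\psi'|$ (hence the relevant weight) blows up, so that the two effects balance and produce $\frac12(\ell_+ + \ell_-)$ rather than $0$ or $\infty$. Concretely this requires a uniformity/tightness statement for the family $\{\bar P^n h\}$ near the endpoints --- the technical lemma alluded to in the introduction as \cite[Remark 2.14]{BGL17}, whose proof (and generalization) is deferred to the Appendix. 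I would therefore organize the proof as: (1) conjugate to $(0,1)$ and reduce to $h$ of bounded variation with $m(g)=1$; (2) prove the bulk decorrelation via exactness/Lin plus Corollary~\ref{cor:lpmu}; (3) invoke the Appendix lemma to control the endpoint tails uniformly in $n$; (4) identify the endpoint contributions with $\avg(F)$ using the explicit form of $\psi$ and a change of variables near $\pm\infty$; (5) combine the pieces and let the cutoff parameters go to their limits.
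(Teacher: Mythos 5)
Your outline captures some correct ingredients --- the reduction via exactness, the use of the zero-type property to kill the contribution of any compact region, and the awareness that the difficulty is concentrated near the two neutral fixed points --- but it is missing the central mechanism of the proof, and the step you delegate to the Appendix lemma is not what that lemma provides. Your plan for the ``tail part'' is to show that the mass of $\bar P^n h$ migrating toward the endpoints ``sees'' the values $\ell_\pm$ and reconstructs $\frac12(\ell_++\ell_-)$. This only makes sense for global observables possessing one-sided limits at $\pm\infty$, as in \eqref{ex-two-lims}. A generic global observable (e.g.\ $F(x)=\sin x$, or the periodic function \eqref{ex-per-fn}) has no boundary values whatsoever; its infinite-volume average $\avg(F)$ is defined only through the symmetric Ces\`aro limit \eqref{def-avg}, and cannot be read off from any local behaviour of $F$ near $\pm\infty$. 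Your parenthetical ``and more generally $\avg(F)$ for $F$ without one-sided limits'' is precisely the point at which the argument has no content: there is no candidate quantity for the tail contribution to converge to unless you integrate $F$ against something that is asymptotically constant on large \emph{symmetric} intervals $[-a,a]$.

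The paper supplies exactly this missing structure. By Lemma~\ref{lem-main-pmu} (exactness) it suffices to verify the limit for a \emph{single} local observable with nonzero mass --- not a dense class, which is a much cheaper reduction than the density argument you propose. One then chooses $g$ even, positive, decreasing on $\bR^+$ with $g''+g'<0$; the Appendix lemma (Lemma~\ref{th:lemma}) is an invariant-cone statement guaranteeing that $P^ng$ retains these monotonicity properties for all $n$ --- it is not a tightness or rate-of-accumulation estimate near the endpoints, which is how you describe it. Because $P^ng$ is even and decreasing on $\bR^+$, its superlevel sets are symmetric intervals $[-a(r),a(r)]$ with $a(r)\ge\bar a$, and Fubini rewrites $\int F\,(P^ng)\,dm$ (after a truncation handled by the zero-type property) as an integral over $r$ of $\int_{-a(r)}^{a(r)}F\,dm$, each slice being small relative to its length $2a(r)$ by the definition of $\avg(F)=0$. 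This slicing is what converts the definition of $\avg$ into an estimate, and it is absent from your proposal; without it, or a substitute of equal strength, the argument cannot handle general global observables. Your conjugation to $(0,1)$ is harmless but also unnecessary --- and it obscures the evenness/monotonicity structure on $\bR$ that makes the slicing work.
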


\begin{proof}
An easy argument, based on Theorem \ref{thm-lin}, shows that it suffices 
to check the limit of Definition \ref{def:GLM} for one fixed local \ob. This 
was given in \cite[Lemma 3.6]{lpmu}; we restate it here for 
convenience.

\begin{lemma} \label{lem-main-pmu}
  Suppose that $T$ is exact and $F$ is a global \ob. If the equality
  \[
    \lim_{n \to \infty} m( (F \circ T^n) g) = \avg(F) m(g)  
  \]
  holds for one local \ob\ $g$ with $m(g) \ne 0$, then it holds for
  all local \ob s $g$.
\end{lemma}

We will find a family of local \ob s which satisfy the above limit
for all global \ob s. In order to do this, we utilize the Perron-Frobenius operator of $T$, that is, the transfer
operator $P$ w.r.t.\ $m$. Defining $\phi_+ := (T |_{\bR^+})^{-1}$ and
$\phi_- := (T |_{\bR^-})^{-1}$, $P$ is given by
\begin{equation} \label{eq:trop}
  (Pg)(x) = | \phi_+'(x) | \, g(\phi_+(x)) + | \phi_-'(x) | \, g(\phi_-(x)) .
\end{equation}
In explicit terms,
\[
  \phi_\pm (x) = \frac x2 \pm \xi(x) , \qquad
  \phi_\pm' (x) = \frac12 \pm \frac{x}{4\xi(x)},
\]
where $\xi(x) := \sqrt{\frac{x^2}{4} + 1}\,$, see Fig.~\ref{fig-boole-map}.
Evidently,
\begin{equation} \label{phi-symm}
  \phi_-(x) = - \phi_+(-x) , \qquad \phi'_-(x) = \phi'_+(-x).
\end{equation}

\begin{figure}[ht] 
\centerline{\includegraphics[width=8cm,keepaspectratio]{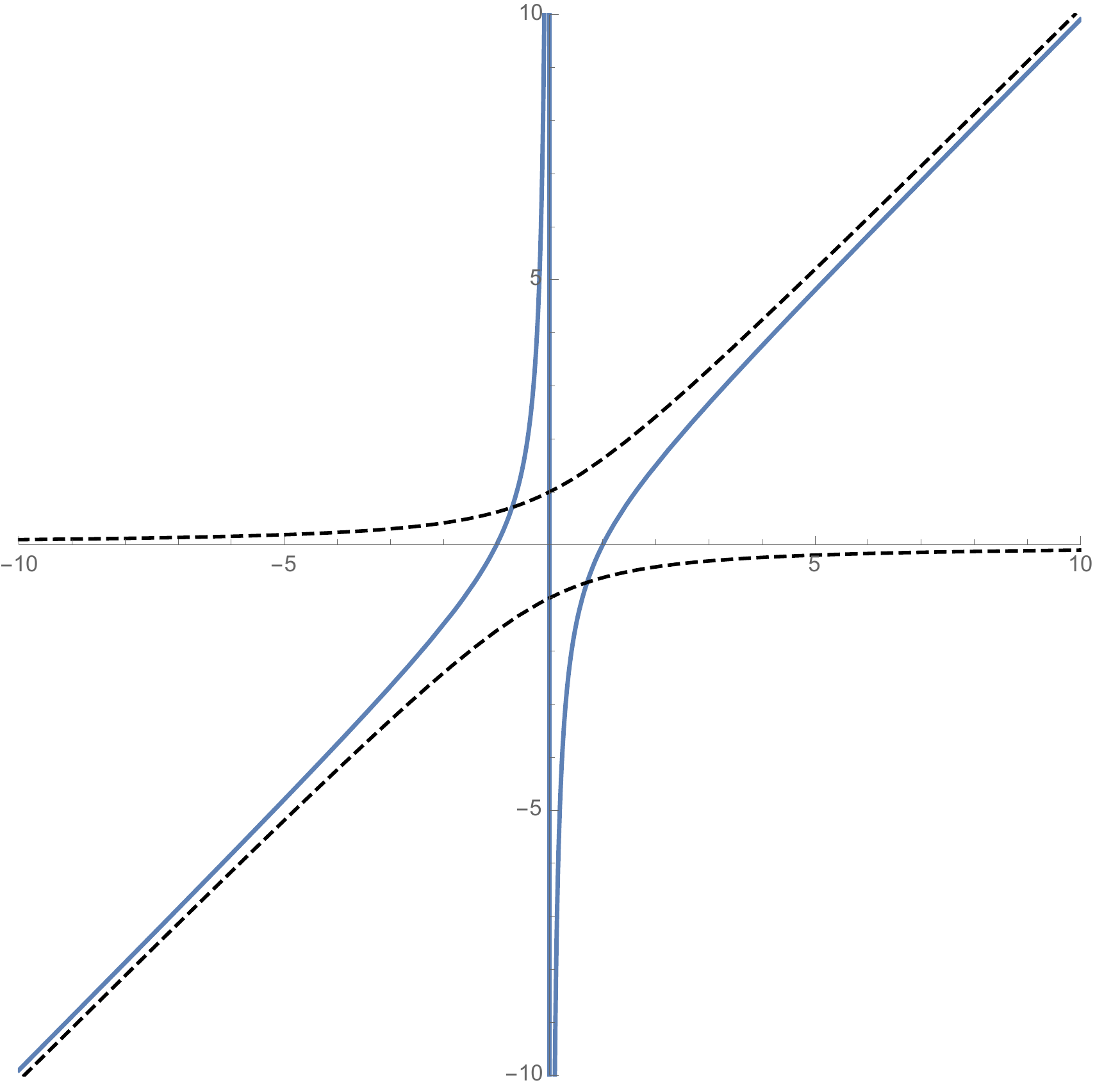}}
\caption{The Boole map $T$ (solid blue lines) and its two inverse branches
  $\phi_+, \phi_-$ (dashed black lines).}
\label{fig-boole-map}
\end{figure}

%\begin{center}
%\begin{tikzpicture}
%\begin{axis}[
%    axis lines = center,
% legend pos=north west
%]
%\addplot [
%    domain=-10:10, 
%    samples=100, 
%    color=red,
%]
%{x - 1/x };
%\addlegendentry{$x - \frac{1}{x}$}
%\addplot [
%    domain=-10:10, 
%    samples=100, 
%    color=blue,
%    ]
%    {x/2 + sqrt((x^2)/4 + 1)};
%\addlegendentry{$\phi_{\pm}$}
%\addplot [
%    domain=-10:10, 
%    samples=100, 
%    color=blue,
%    ]
%    {x/2 - sqrt((x^2)/4 + 1)};
% \end{axis}
%\end{tikzpicture}
%\end{center}

Let us now specialize to local observables $g$ that are even \fn s 
of $x$. In view of (\ref{phi-symm}), expression (\ref{eq:trop}) becomes
\begin{equation} \label{eq:trop-even}
  (Pg)(x) = \phi'_+(x) \, g(\phi_+(x)) + \phi'_+(-x) \, g(\phi_+(-x)) .
\end{equation}
This essentially coincides with the Perron-Frobenius operator of the 
map $\tilde T :\bR^+ \to \bR^+$ given by
\begin{equation} \label{tilde-t}
  \tilde T(x) := \left\{ 
  \begin{array}{ll}
     \ds x-\frac 1x \,, & \mbox { if } x\ge 1 ; \\[12pt] 
     \ds \frac 1x -x \,, & \mbox{ if } 0 < x \le 1 .
   \end{array} 
   \right.
\end{equation}
In fact, $\tilde T$ has two inverse branches, $\phi_0 : [0,+\infty) \to 
[1,+\infty)$ and $\phi_1 : [0,+\infty) \to (0,1]$, given by, respectively,
\begin{align*}
  \phi_0(x) &= \phi_+(x) = \frac x2 +\xi(x); \\
  \phi_1(x) &= \phi_+(-x) = -\frac x2 +\xi(x).
\end{align*}
The graphs of $\tilde T$, $\phi_0$ and $\phi_1$ are depicted in
Fig.~\ref{fig-new-map}.

\begin{figure}[ht] 
\centerline{\includegraphics[width=8cm,keepaspectratio]{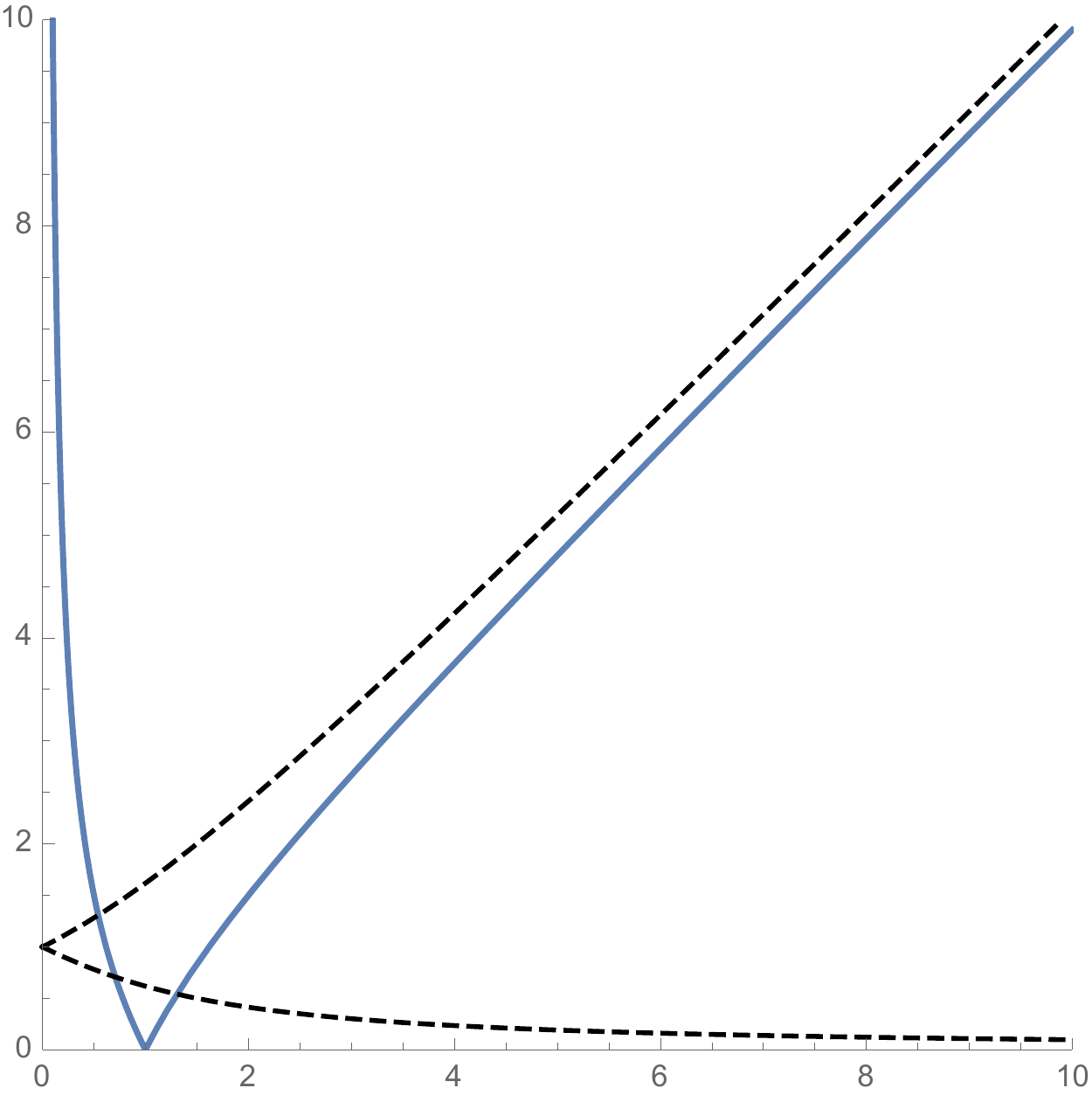}}
\caption{The map $\tilde T$ (solid blue lines) and its inverse branches
  $\phi_0, \phi_1$ (dashed black lines).}
\label{fig-new-map}
\end{figure}

%\begin{center}
%\begin{tikzpicture}
%\begin{axis}[
%    axis lines = center,
% legend pos=north west
%]
%\addplot [
%    domain=1:10, 
%    samples=100, 
%    color=red,
%]
%{x - 1/x };
%\addlegendentry{$\tilde T$}
%\addplot [
%    domain=0:10, 
%    samples=100, 
%    color=blue,
%    ]
%    {x/2 + sqrt((x^2)/4 + 1)};
%\addlegendentry{$\phi_{0,1}$}
%\addplot [
%    domain=0.1:1, 
%    samples=100, 
%    color=red,
%]
%{1/x-x };
%\addplot [
%    domain=0:10, 
%    samples=100, 
%    color=blue,
%    ]
%    {-x/2 + sqrt((x^2)/4 + 1)};
% \end{axis}
%\end{tikzpicture}
%\end{center}
%
Clearly, $\tilde T$ preserves the Lebesgue \me\ on $\bR^+$, which we
keep denoting by $m$. The Perron-Frobenius operator $\tilde P$ of $\tilde T$
acts on \fn s $\tilde g \in L^1(\bR^+, m)$ as
\[
  (\tilde P \tilde g)(x) = | \phi'_0(x) | \, \tilde g(\phi_0(x)) +
  | \phi'_1(x) | \, \tilde g(\phi_1(x)) .
\]
This expression coincides with (\ref{eq:trop-even})
restricted to $x > 0$.

We are thus reduced to studying  the transfer operator of the map $\tilde T$,
which belongs to the class of maps of the half-line studied in \cite{BGL17}. 
The problem is that in \cite{BGL17} global-local \m\ is proved for maps with 
all increasing branches, and the case of maps with one decreasing branch
is only briefly discussed in \cite[Remark 2.14]{BGL17}.  

The following technical lemma is crucial.

\begin{lemma} \label{th:lemma}
  Let $\tilde T$ be defined by (\ref{tilde-t}), with $\tilde P$ its 
  associated Perron-Frobenius operator. If
  \[
    {\cal F} := \left\{ \tilde g \in L^1(\bR^+,m) \,:\, \tilde g \in C^2,\, \tilde g>0,\, 
    \tilde g'<0,\, \tilde g'' + \tilde g'<0 \right\}
  \]
  then $\tilde P({\cal F})\subset {\cal F}$.
\end{lemma}

Lemma \ref{th:lemma} is proved in the Appendix, where we first show that
its assertion holds for a class of maps with one decreasing and 
one increasing branch, and then check that $\tilde T$ belongs to this class. 

The arguments outlined earlier imply the corresponding result for the Boole 
map $T$:

\begin{corollary} \label{th:lemma-boole}
  If $g\in L^1(\bR, m)$ and is $C^2$ in $\bR \setminus \{0\}$, even, positive and 
  such that $g'(x)<0$ and $g''(x) + g'(x) <0$ for all $x>0$, then $Pg$ has the 
  same properties.
\end{corollary}

We take a $g$ as in Corollary \ref{th:lemma-boole} to guarantee that,
for all $n$, $P^n g$ is even and strictly decreasing on $\bR^+$. In view of
Lemma \ref{lem-main-pmu}, Theorem \ref{th:main} will be proved once we 
show that for all global \ob s $F$,
\begin{equation} \label{goal}
  \lim_{n \to \infty} m(F (P^n g)) = \avg(F) m(g).
\end{equation}
We follow the proof of Lemma 4.3 of \cite{BGL17} except for a few twists. 
First of all, we assume $\avg(F) = 0$ and $m(g)=1$, otherwise one can prove 
(\ref{goal}) for $F_1 := F - \avg(F)$ and $g_1 := \frac{g}{m(g)}$, and then easily 
derive it for $F$ and $g$.

Fix $\varepsilon > 0$. By definition (\ref{def-avg}) there exists $\bar{a} > 0$ 
such that
\begin{equation} \label{main20}
  \forall a \ge \bar{a}, \quad \frac{1}{2a} \left| \int_{-a}^{a} F \, dm \right| 
  \le \frac{\varepsilon}{2}.
\end{equation}
For $x \in \bR$ and $n \in \bN$, set 
\[
  \gamma_n(x) = \gamma_{n,\bar{a}}(x) := \min \{ P^n g(\bar{a}), P^n g(x) \} .
\]
Thus, $\gamma_n $ is a positive, even \fn, flat on $[-\bar{a}, \bar{a}]$ and
strictly decreasing on $[\bar{a}, +\infty)$. It is a local observable because 
$\| \gamma_n \|_1 \le \| P^n g \|_1 =  \| g \|_1 = 1$. 

We defined $\gamma_n$ to use it in place of $P^n g$ in (\ref{goal}). This 
can be done to a better and better approximation as $n \to \infty$. The 
advantage is that,
thinking of $\gamma_n$ as a density w.r.t.\ which we integrate $F$, we can 
``slice'' this density into uncountably many horizontal segments, all longer 
than $2\bar{a}$. By virtue of (\ref{main20}), the integral of $F$ over each 
segment is very small, thus showing that the original integral is also
small. 

Let us make this idea precise. Write
\begin{equation} \label{main-goal2}
  m(F (P^n g)) = \int_{-\infty}^{+\infty} F \gamma_n \, dm +
  \int_{-\bar{a}}^{\bar{a}} F (P^n g - \gamma_n) \, dm =: \mathcal{I}_1 + 
  \mathcal{I}_2.
\end{equation}

We first consider $\mathcal{I}_2$. Since $P^n g \ge \gamma_n > 0$ we 
have
\[
  0 \le \int_{-\bar{a}}^{\bar{a}}  (P^n g - \gamma_n) \, dm \le 
  \int_{-\bar{a}}^{\bar{a}} P^n g \, dm = m\! \left( (1_{[-\bar{a}, \bar{a}]} \circ 
  T^n) g \right) ,
\]
where $1_{[-\bar{a}, \bar{a}]}$ is the indicator \fn\ of $[-\bar{a}, \bar{a}]$.
The fact that $T$ is of zero-type (see Corollary \ref{cor:lpmu} and following
paragraph) implies that the rightmost term above vanishes as $n \to \infty$.
Thus, for all sufficiently large $n$,
\begin{equation} \label{main50}
  | \mathcal{I}_2 |  \le
  \| F \|_\infty  \int_{-a}^{a} (P^n g - \gamma_n) \, dm \leq \frac{\varepsilon}{2}.
\end{equation}

Now for $\mathcal{I}_1$. For $0 < r < \max \gamma_n = \gamma_n (\bar{a})$,
let $a(r)$ be uniquely defined by
\[
  a(r) > \bar{a}, \quad \gamma_n (a(r)) = r.
\]
In other words, $a(r) = \left( \gamma_n |_{ (\bar{a}, +\infty) } \right)^{-1} (r)$.
We rewrite $\mathcal{I}_1$ as
\[ 
  \mathcal{I}_1 = \int_{-\infty}^{+\infty} \! F(x) \left( \int_0^{\gamma_n(x)} 
  \!\!\ dr \right) dx = \int_0^{\gamma_n (\bar{a})} \! \left( \int_{-a(r)}^{a(r)} \! 
  F(x) \, dx \right) dr, 
\]
where we have used Fubini's Theorem. Using (\ref{main20}) with 
$a := a(r)$, we estimate the above as follows:
\[
\begin{split}
  | \mathcal{I}_1 | &\le \int_0^{\gamma_n (\bar{a})} \left| \int_{-a(r)}^{a(r)} 
  \! F(x) \, dx \right| dr 
  \le \frac{\varepsilon}2 \int_0^{\gamma_n (\bar{a})} \! 2 a(r) \, dr \\
  &= \frac{\varepsilon}2 \, m(\gamma_n) \le \frac{\varepsilon}2 ,
  \end{split}
\]
because $m(\gamma_n) = \| \gamma_n \|_1 \le 1$. The above estimate holds 
uniformly in $n$. Together with (\ref{main-goal2}) and (\ref{main50}), it implies 
(\ref{goal}) for the case $\avg(F) = 0$ and $m(g)=1$. As already
discussed, this is enough to prove Theorem \ref{th:main}.
\end{proof}

\section{Application}

We conclude our exposition by presenting a more sophisticated application
of global-local \m\ than previously discussed (in the paragraphs after 
Definition \ref{def:GLM}). It has to do with the stochastic properties of our 
map, that is, with the interpretation of \ob s as random variables w.r.t.\ a 
random choice of the initial condition. The interested reader will find more 
details in Section 3.2 of \cite{BGL17}.

\begin{definition}
  Let $F_n$ be a sequence of measurable \fn s $\bR \to \bR$ and $X$ a 
  random variable on some probability space $(\Omega, \bP)$. We say that
  $F_n$ converges to $X$ in \emph{strong distributional sense}, as
  $n \to \infty$, if for all \pr\ \me s $\nu \ll m$ the distribution of $F_n$ 
  w.r.t.\ $\nu$ converges to that of $X$.
\end{definition}

\begin{proposition} \label{prop-new-app}
  Let $T$ be the Boole map and $F: \bR \to \bR$ a measurable \fn\ such
  that $\avg( e^{i \theta F} )$ exists for all $\theta \in \bR$. Then $F \circ T^n$ 
  converges in strong distributional sense, as $n \to \infty$, to the random 
  variable $X$ with characteristic \fn\ $\varphi_X(\theta) := \avg( e^{i \theta F} )$.
\end{proposition}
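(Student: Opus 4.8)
The plan is to prove convergence of characteristic functions and invoke L\'evy's continuity theorem. Fix $\theta \in \bR$ and a probability measure $\nu \ll m$. The characteristic function of $F \circ T^n$ with respect to $\nu$ is
\[
  \varphi_n(\theta) = \int_{\bR} e^{i\theta (F \circ T^n)} \, d\nu
  = \int_{\bR} \big( e^{i\theta F} \circ T^n \big) \, g \, dm,
\]
where $g := \frac{d\nu}{dm} \in L^1(m)$ is a local observable with $m(g) = 1$. The function $G := e^{i\theta F}$ is bounded (by $1$) and, by hypothesis, $\avg(G) = \avg(e^{i\theta F}) = \varphi_X(\theta)$ exists, so $G$ is a (complex-valued) global observable. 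Applying Theorem \ref{th:main} --- extended to complex observables by treating real and imaginary parts separately, which is immediate since $\avg$ and $m$ are $\bR$-linear --- we get
\[
  \lim_{n \to \infty} \varphi_n(\theta) = \avg(G) \, m(g) = \varphi_X(\theta).
\]

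Next I would check that $\varphi_X$ is genuinely a characteristic function of some random variable $X$, so that the conclusion is meaningful. One could appeal to the fact that $\varphi_X$ arises as a pointwise limit of the characteristic functions $\varphi_n$, each of which is a bona fide characteristic function (of the random variable $F \circ T^n$ under $\nu$); it therefore satisfies $\varphi_X(0) = 1$, $|\varphi_X| \le 1$, and is positive-definite, and one only needs continuity at $\theta = 0$ to invoke Bochner's theorem or, equivalently, L\'evy's continuity theorem in its converse form. Continuity at $0$ follows from the hypothesis that $\avg(e^{i\theta F})$ exists for \emph{all} $\theta$ together with dominated convergence inside the defining limit \eqref{def-avg}: for a fixed large $a$, $\theta \mapsto \frac{1}{2a}\int_{-a}^a e^{i\theta F}\, dm$ is continuous, and a short uniformity argument in $a$ promotes this to continuity of the limit. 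Alternatively, and more cleanly, one simply \emph{defines} $X$ to be any random variable whose law is the weak limit guaranteed by L\'evy's theorem from the pointwise-convergent sequence $\varphi_n$; its characteristic function is then $\varphi_X$ by construction.

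Finally, since $\varphi_n(\theta) \to \varphi_X(\theta)$ for every $\theta \in \bR$ and $\varphi_X$ is continuous at $0$, L\'evy's continuity theorem yields that the distribution of $F \circ T^n$ under $\nu$ converges weakly to that of $X$. As $\nu \ll m$ was arbitrary, this is precisely convergence of $F \circ T^n$ to $X$ in the strong distributional sense, completing the proof.

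I expect the only genuine subtlety to be the justification that $\varphi_X$ is continuous at the origin (equivalently, that the family $\{ e^{i\theta F} : \theta \in \bR\}$ of global observables has infinite-volume averages depending continuously on $\theta$), since everything else is a direct application of Theorem \ref{th:main} and standard probabilistic machinery. If one instead routes through the converse direction of L\'evy's theorem by reading off tightness from the fact that each $\varphi_n$ is a characteristic function, this obstacle disappears entirely and the argument becomes essentially a one-line corollary of Theorem \ref{th:main}.
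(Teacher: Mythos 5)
Your core argument is precisely the paper's proof: with $g=\frac{d\nu}{dm}$, the hypothesis makes $e^{i\theta F}$ a (complex-valued) global observable for each $\theta$, Theorem \ref{th:main} gives $\nu(e^{i\theta F\circ T^n})=m\big((e^{i\theta F}\circ T^n)\,g\big)\to\avg(e^{i\theta F})=\varphi_X(\theta)$, and L\'evy's continuity theorem converts pointwise convergence of characteristic functions into convergence in distribution. Up to that point you match the paper exactly.

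The supplementary material on why $\varphi_X$ is a genuine characteristic function is where you go astray, and both of your proposed fixes fail. The existence of $\avg(e^{i\theta F})$ for every $\theta$ does \emph{not} imply continuity at $\theta=0$: for $F(x)=x$ one has $\frac{1}{2a}\int_{-a}^{a}e^{i\theta x}\,dx=\frac{\sin(a\theta)}{a\theta}$, whose limit as $a\to+\infty$ equals $1$ at $\theta=0$ and $0$ elsewhere; so no ``uniformity argument in $a$'' can exist, and $\varphi_X$ need not be a characteristic function at all. Your fallback is circular for the same reason: L\'evy's theorem produces a weak limit from a pointwise-convergent sequence of characteristic functions \emph{only} when the limit function is continuous at $0$, which is exactly the point in question --- in the example above the mass of $\nu\circ T^{-n}$ escapes to infinity and there is no limiting variable. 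The way out is simply to read the statement as the paper does: the phrase ``the random variable $X$ with characteristic function $\varphi_X$'' presupposes that such an $X$ exists, so $\varphi_X$ is automatically continuous everywhere and the forward direction of L\'evy's theorem applies with nothing further to check. With that reading, your proof is complete and coincides with the paper's.
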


\begin{proof}
Given a probability \me\ $\nu \ll m$, let $g$ denote its Radon-Nikodym 
derivative $\frac{d\nu}{dm}$. For all $\theta \in \bR$, $e^{i \theta F}$ is a 
global \ob\ by hypothesis (observe that $F$ takes values in $\bR$). 
Thus, by global-local \m,
\[
  \lim_{n \to \infty} \nu ( e^{i \theta F \circ T^n} ) = \lim_{n \to \infty} 
  m(( e^{i \theta F \circ T^n}) g) = \avg( e^{i \theta F} ) = \varphi_X(\theta).
\]
Pointwise convergence of the characteristic \fn\ is equivalent to 
convergence in distribution.
\end{proof}

We illustrate this property by means of an example similar to the 
one presented after Definition \ref{def:GLM}. Set $F(x) := \{x\}$, where
$\{ \cdot \}$ denotes the fractional part of a real number. It is very easy 
to verify that $\avg( e^{i \theta F} ) = \int_0^1 e^{i \theta x} \, dx$, 
for all $\theta$. Therefore, Proposition \ref{prop-new-app} implies that
for a random initial condition $x$, w.r.t.\ any law $\nu \ll m$, the fractional 
part of $T^n(x)$ tends to be uniformly distributed in $[0,1)$, as $n \to \infty$.

Adding a small hypothesis on $F$ gives a result that is peculiar
to maps with neutral fixed points in whose neighborhoods lies
``most'' of the infinite invariant \me. This is the case for the Boole 
map too. 

\begin{definition} 
  We say that $F: \bR \to \bR$ is \emph{uniformly continuous at 
  infinity} if, for every $\varepsilon > 0$, there exist $\delta, Q > 0$
  such that
  \[
    \forall x,y \in \bR \setminus (-Q, Q) \mbox{ with  }|x-y| \le \delta,
    \quad |F(x) - F(y)| \le \varepsilon.
  \]
\end{definition} 

\begin{proposition} \label{prop-new-app2} 
  In addition to the hypotheses of Proposition \ref{prop-new-app},
  suppose that $F$ is bounded and uniformly continuous at infinity. 
  Also, for $k \in \bZ^+$, denote by
  \[
    \mathcal{A}_k F := \frac1{k} \sum_{j=0}^{k-1} F \circ T^j 
  \]
  the partial Birkhoff average of $F$. Then:
  \begin{itemize}
  \item[(i)] For $k$ fixed and $n \to \infty$, $\mathcal{A}_k F \circ T^n$ 
    converges in strong distributional sense to the random variable $X$ 
    defined in the statement of Proposition \ref{prop-new-app}.
  
  \item[(ii)] There exists a diverging subsequence $(k_n)$ such that,
    as $n \to\infty$, $\mathcal{A}_{k_n} F \circ T^n$ converges in strong 
    distributional sense to $X$.
  \end{itemize}
\end{proposition}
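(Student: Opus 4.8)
The plan is to derive both items from Proposition~\ref{prop-new-app}, applied not to $F$ but to its Birkhoff averages $\mathcal{A}_k F$. What makes part~(i) work is that the Boole map displaces a point $x$ with $|x|$ large by only $O(1/|x|)$, so that $\mathcal{A}_k F$ and $F$ coincide up to an arbitrarily small error outside a large compact set. Concretely, I would first establish the following: \emph{for every $\eps>0$ and every $k\in\bZ^+$ there is $Q=Q(\eps,k)>0$ such that $|\mathcal{A}_k F(x)-F(x)|\le\eps$ whenever $|x|\ge Q$.} To see this, take $\delta,Q_0>0$ from the hypothesis that $F$ is uniformly continuous at infinity (for this $\eps$), set $R:=\max\{Q_0,1,(k-1)/\delta\}$ and $Q:=R+k$; using $T(u)=u-\tfrac1u$ and the telescoping identity $T^j(y)=y-\sum_{i=0}^{j-1}\tfrac1{T^i(y)}$ one checks by induction that, for $y\ge Q$, $T^j(y)\ge R$ and $|T^j(y)-y|\le (k-1)/R\le\delta$ for all $0\le j\le k-1$; the symmetry $T(-x)=-T(x)$ gives the corresponding statement for $x\le-Q$. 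Since then $x$ and each $T^j(x)$ lie outside $(-Q_0,Q_0)$ at distance $\le\delta$, uniform continuity at infinity yields $|F(T^j(x))-F(x)|\le\eps$, and averaging over $j=0,\dots,k-1$ proves the claim.

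Granting the claim, I would next show that for each fixed $k$ and each $\theta\in\bR$ the function $e^{i\theta\mathcal{A}_k F}$ is a global \ob\ with $\avg(e^{i\theta\mathcal{A}_k F})=\avg(e^{i\theta F})=\varphi_X(\theta)$. Indeed, for $a>Q(\eps,k)$,
\[
  \Big|\frac1{2a}\int_{-a}^{a}\big(e^{i\theta\mathcal{A}_k F}-e^{i\theta F}\big)\,dm\Big|
  \le\frac1{2a}\big(4\,Q(\eps,k)+2a\,|\theta|\,\eps\big)=\frac{2\,Q(\eps,k)}{a}+|\theta|\,\eps,
\]
using $|e^{i\theta\mathcal{A}_k F}-e^{i\theta F}|\le2$ on $[-Q(\eps,k),Q(\eps,k)]$ and $|e^{i\theta\mathcal{A}_k F(x)}-e^{i\theta F(x)}|\le|\theta|\,|\mathcal{A}_k F(x)-F(x)|\le|\theta|\,\eps$ for larger $|x|$; letting $a\to\infty$ and then $\eps\to0$ gives $\avg(e^{i\theta\mathcal{A}_k F}-e^{i\theta F})=0$, and since $\avg$ is linear on global \ob s and $e^{i\theta F}$ is one by hypothesis, the assertion follows. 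As $\mathcal{A}_k F$ is a bounded, $\bR$-valued, measurable function for which $\avg(e^{i\theta\mathcal{A}_k F})$ exists for all $\theta$, Proposition~\ref{prop-new-app} applied to $\mathcal{A}_k F$ yields that $\mathcal{A}_k F\circ T^n$ converges in strong distributional sense, as $n\to\infty$, to the random variable with characteristic function $\theta\mapsto\avg(e^{i\theta\mathcal{A}_k F})=\varphi_X(\theta)$, that is, to $X$. This proves part~(i).

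For part~(ii) I would pass to a metric reformulation of strong distributional convergence and then diagonalize. Since the set of probability densities in $L^1(\bR,m)$ is separable, fix a countable family $\{\nu_i\}_{i\ge1}$ of absolutely continuous probability measures whose densities are $L^1$-dense among all such densities. A routine total-variation approximation (cf.\ Section~3.2 of \cite{BGL17}) shows that a sequence $(H_n)$ of measurable functions converges to $X$ in strong distributional sense if and only if $H_n$ converges in distribution to $X$ with respect to each $\nu_i$. Choosing, for every $i$, a metric $\rho_i$ for convergence in distribution of $\bR$-valued random variables on $(\bR,\nu_i)$, and writing $\mathcal{L}_{\nu_i}(H)$ for the law of $H$ under $\nu_i$, one sees that strong distributional convergence of $(H_n)$ to $X$ is equivalent to $d(H_n)\to0$, where $d(H):=\sum_{i\ge1}2^{-i}\min\{1,\rho_i(\mathcal{L}_{\nu_i}(H),\mathcal{L}(X))\}$.

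By part~(i), $d(\mathcal{A}_k F\circ T^n)\to0$ as $n\to\infty$ for every fixed $k$. Pick a strictly increasing sequence $(N_k)_{k\ge1}$ of integers with $d(\mathcal{A}_k F\circ T^n)\le1/k$ for all $n\ge N_k$, and put $k_n:=\max\{k:N_k\le n\}$ for $n\ge N_1$ (and $k_n:=1$ otherwise). Then $(k_n)$ is nondecreasing, $k_n\to\infty$ (as $k_n\ge K$ once $n\ge N_K$), and $N_{k_n}\le n$, so $d(\mathcal{A}_{k_n}F\circ T^n)\le1/k_n\to0$; this is precisely the strong distributional convergence of $\mathcal{A}_{k_n}F\circ T^n$ to $X$ asserted in~(ii). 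I expect the only delicate point to be the reduction, in the third paragraph, of the ``for all $\nu\ll m$'' formulation of strong distributional convergence to the countable family $\{\nu_i\}$, and hence to the single metric $d$; the dynamical estimate of the first paragraph and the diagonal extraction are routine.
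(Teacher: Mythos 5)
Your proposal is correct, and it follows the same strategy that the paper intends: the paper gives no proof here but defers to Proposition 3.5 of \cite{BGL17}, whose argument is precisely to show that the small displacement of $T$ near the neutral fixed points, combined with uniform continuity at infinity, forces $\avg(e^{i\theta \mathcal{A}_k F}) = \avg(e^{i\theta F})$ so that Proposition \ref{prop-new-app} applies to $\mathcal{A}_k F$, and then to extract the diverging subsequence in (ii) by a diagonal argument. Your write-up supplies the details the paper omits (in particular the explicit orbit estimate $|T^j(y)-y|\le (k-1)/R$ and the reduction of strong distributional convergence to a countable dense family of densities, which makes the diagonalization legitimate), and I see no gap in it.
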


The significance of Proposition \ref{prop-new-app2} is that, for the Boole
map, the asymptotic distribution of an \ob\ is the same as that of any of
its partial time averages, at least for a large class of global \ob s. This 
phenomenon cannot occur for \m\ maps preserving a finite \me, cf.\ 
\cite[Section 3.2]{BGL17}. Proposition \ref{prop-new-app2} is proved 
almost exactly as Proposition 3.5 of \cite{BGL17}. The fact that the latter 
refers to maps $\bR^+ \to \bR^+$ and is formulated with a slightly stronger
assumption on $F$ plays no substantial role. 

Coming back to the example $F(x) = \{x\}$, one may observe that $F$
is not uniformly continuous at infinity. But we can define a very similar
\ob\ which is. For $x \in \bR$, let $\lfloor x \rfloor$ denote the maximum 
integer not exceeding $x$; for $x \ge 0$, $\lfloor x \rfloor$ is the integer
part of $x$, whence $x - \lfloor x \rfloor$ is its fractional part. Set
\[
  F_1(x) := \left\{
  \begin{array}{cl}
    x - \lfloor x \rfloor \,, & \mbox{ if $\lfloor x \rfloor$ is even}; \\[2pt]
    1 - x + \lfloor x \rfloor \,, & \mbox{ if $\lfloor x \rfloor$ is odd}.
  \end{array}
  \right.
\]
As in the previous example, $F_1 \circ T^n$ tends to be uniformly 
distributed on $[0,1]$, when $n \to \infty$. By Proposition \ref{prop-new-app2},
so do $\mathcal{A}_k F_1 \circ T^n$ and $\mathcal{A}_{k_n} F_1 \circ T^n$, 
for a certain subsequence $(k_n)$.

\begin{remark}
  A question that may be of interest is this: what random variables $X$ 
  can be strong distributional limits of $F \circ T^n$, for some $F$? The 
  answer is all random variables. In 
  fact, given a variable $X$ on a probability space $(\Omega, \bP)$, let 
  $\Phi (y) := \bP( X \le y )$ denote its distribution \fn\ and 
  \[
    \Phi^{-1}(x) := \inf \, \{ y \in \bR \,:\, \Phi (y) > x \}
  \]
  the right-continuous generalized inverse of $\Phi$. Here $x \in [0,1)$.
  It is well-known that, calling $U$ the uniform random variable on $[0,1]$,
  $\Phi^{-1} \circ U = X$ (in the sense of distributions). In other words,
  $\phi_X(\theta) = \int_0^1 e^{i \theta \Phi^{-1}(x)} \, dx$. Therefore, if
  $F(x) := \Phi^{-1}( x - \lfloor x \rfloor )$ is the periodization of
  $\Phi^{-1}$, then clearly
  \[
    \avg( e^{i \theta F} ) = \int_0^1 e^{i \theta \Phi^{-1}(x)} \, dx =
    \phi_X(\theta)
  \]
  and Proposition \ref{prop-new-app} applies.
  
  But $F$ may not be bounded and is surely not continuous --- for 
  a periodic \fn, continuity is the same as uniform continuity at infinity
  --- so, if the question is: for what variables $X$ do the assertions of 
  Proposition \ref{prop-new-app2} hold?, the answer is: at least all 
  variables which take values densely in an interval $[a,b]$ (this 
  means that $\bP( X \in [a,b] ) = 1$ and $\bP( X \in [c,d] ) > 0$, for all 
  $[c,d] \subseteq [a,b]$). In fact, for any such variable, $\Phi^{-1}$ 
  takes values in $[a,b]$ and is continuous. Therefore, the \fn\
  \[
    F_1(x) := \left\{
    \begin{array}{cl}
      \Phi^{-1}( x - \lfloor x \rfloor ) \,, & \mbox{ if $\lfloor x \rfloor$ 
      is even}; \\[2pt]
      \Phi^{-1}( 1 - x + \lfloor x \rfloor ) \,, & \mbox{ if $\lfloor x \rfloor$ 
      is odd}
    \end{array}
    \right.
  \]
  is bounded, 2-periodic, continuous and such that 
  $\avg( e^{i \theta F_1} ) = \phi_X(\theta)$. It thus satisfies the 
  hypotheses of Proposition \ref{prop-new-app2}.
\end{remark}

\appendix
\section{Appendix: Proof of Lemma \ref{th:lemma}}

We consider a generic map $S:\bR^+ \to \bR^+$, which is Markov and
piecewise $C^3$ with respect to the partition $I_{0} := [a,+\infty)$, 
$I_{1} := (0,a]$. It also satisfies the following hypotheses (H1)-(H4).

\begin{enumerate}
\item[(H1)] $S(I_{0}) = S(I_{1}) = \bR_0^+$;
\item[(H2)] $\forall x \in I_{0}$, $S'(x) > 1$; $S'(x)\to 1$ as $x\to \infty$; 
  $\forall x \in I_{1}$, $S'(x) < -1$;
\item[(H3)] $S$ preserves the Lebesgue measure $m$.
\end{enumerate}

Denoting $\phi_j := \left(S|_{I_j} \right)^{-1}$, for $j \in \{0,1\}$, we rewrite (H2) 
as follows

\begin{itemize}
\item[(H2)]
  \begin{itemize}
  \item[(i)] $\phi_{0}(0)=a$; $\forall x \in \bR_0^+$, $0<\phi'_{0}(x)<1$; 
  $\phi'_{0}(x)\to 1$ as $x\to \infty$;
  \item[(ii)] $\phi_{1}(0)=a$; $\forall x \in \bR_0^+$, $-1 < \phi'_{1}(x)<0$.
  \end{itemize}
\end{itemize}

Let $L : L^1(\bR^+, m) \to L^1(\bR^+, m)$ be the Perron-Frobenius operator
associated to $S$, i.e., the transfer operator relative to the Lebesgue \me\ $m$.
In formula:
\[
(L g)(x) = \phi_{0}'(x) \, g(\phi_{0}(x))  -\phi_{1}'(x) \, g(\phi_{1}(x)) .
\]
If we let $L$ act on $1$,  the \fn\ identically equal to 1 on
$\bR_0^+$, noting that strictly speaking $1$ does not belong to $L^1$, we
observe that (H3) is equivalent to $L 1 = 1$, or
\begin{enumerate}
\item[(H3)] $\forall x \in \bR_0^+$, $\phi_{0}'(x) - \phi_{1}'(x) =1$.
\end{enumerate}
Notice that (H3) implies that $\phi''_0(x) = \phi''_1(x)$ and $\phi'''_0(x) = \phi'''_1(x)$. 

We further assume that

\begin{itemize}
\item[(H4)] 
  \begin{itemize}
  \item[(i)] $\forall x\in \bR^+$, $1+2\phi'_{1}(x) > 0$;
  \item[(ii)] $\forall x\in \bR^+$, $\phi''_{1}(x) - (\phi'_{1}(x))^2 >0$;
  \item[(iii)] for all $x\in \bR^+$ one of the following is satisfied:
    \begin{itemize}
    \item[(a)] $\phi'''_1(x) + \phi''_1(x)>0$ and $3\, \phi''_{1}(x) - 
    (\phi'_{1}(x))^2 + \phi'_1(x)> 0$ 
    \item[or]
    \item[(b)] $\phi'''_1(x) + \phi''_1(x) - (\phi'_{1}(x))^2 >0$.
    \end{itemize}
  \end{itemize}
\end{itemize}

Observe that condition (H4)(i) is equivalent to $\phi_1'(x) > -1/2$ and 
thus supersedes one of the inequalities of (H2)(ii). We prefer to leave 
the latter as it is because the meaning of (H2) is that $S$ has one 
decreasing and one increasing branch, both expanding.

\begin{theorem} \label{thm:onedec}
Let $S$ be a piecewise $C^3$ map w.r.t.\ a partition $I_{0}=[a,+\infty)$, 
$I_{1}=(0,a]$ and assume that $S$ satisfies (H1)-(H4). Setting
\[
\mathcal{F} := \left\{ g:\bR^+\to \bR \,:\, g\in C^2 ,\, g>0 , \, g'<0 , \, g''+g'<0 
\right\} 
\]
we have $L(\mathcal{F}) \subset \mathcal{F}$.
\end{theorem}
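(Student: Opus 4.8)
The plan is to verify directly that if $g \in \mathcal{F}$ then $Lg$ satisfies the three defining inequalities of $\mathcal{F}$: positivity, strict decrease, and $(Lg)'' + (Lg)' < 0$. Write $(Lg)(x) = \phi_0'(x)\, g(\phi_0(x)) - \phi_1'(x)\, g(\phi_1(x))$. Positivity is immediate, since $\phi_0' > 0$, $-\phi_1' > 0$ by (H2), and $g > 0$. The real content is in the two derivative inequalities, and the strategy throughout will be to differentiate, collect terms, and bound everything using (i) the sign information on $g, g', g''$, (ii) the relations $\phi_0' - \phi_1' = 1$ and its consequences $\phi_0'' = \phi_1''$, $\phi_0''' = \phi_1'''$ from (H3), and (iii) the structural hypotheses (H4).

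First I would compute $(Lg)'(x)$. Differentiating and using $\phi_0'' = \phi_1''$, one gets a term with $g'' \cdot$(something) plus a term with $g' \cdot$(something) plus possibly a $g \cdot \phi''$ piece. The idea will be to group the contributions from the two branches so that the common factor $\phi_0'' = \phi_1''$ multiplies $g(\phi_0) - g(\phi_1)$, which has a controlled sign since $g$ is monotone and $\phi_0 \ge a \ge \phi_1$ (so $g(\phi_0) \le g(\phi_1)$). The squared-derivative terms $(\phi_0')^2 g''(\phi_0)$ and $(\phi_1')^2 g''(\phi_1)$ are negative because $g'' < g''+g' < 0$ — wait, more carefully, $g'' < -g' $; since $g' < 0$ we only get $g'' < |g'|$, not $g'' < 0$ outright, so I must be careful: the hypothesis is $g'' + g' < 0$, i.e.\ $g'' < -g'$, and separately we do \emph{not} know the sign of $g''$. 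This is exactly why the combination $g'' + g'$ (rather than $g''$ alone) is the natural quantity, and it explains the somewhat intricate shape of (H4). So the correct bookkeeping is to express $(Lg)'$ and $(Lg)'' + (Lg)'$ as positive combinations of $g(\phi_j)$, $-g'(\phi_j) > 0$, and $-(g''(\phi_j)+g'(\phi_j)) > 0$, with coefficients that the hypotheses (H4)(i)--(iii) are precisely designed to make have the right sign.

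Concretely, for $(Lg)' < 0$: after substituting $\phi_0' = 1 + \phi_1'$ and using (H3), I expect the expression to reduce to something like a combination whose dominant coefficients involve $1 + 2\phi_1'$ (controlled by (H4)(i)) and $\phi_1'' - (\phi_1')^2$ (controlled by (H4)(ii)), multiplying the favorable-sign quantities $-g'$ and the difference $g(\phi_0) - g(\phi_1)$. For $(Lg)'' + (Lg)' < 0$ the computation is heavier: differentiate once more, collect the $g'''$-free combination $g'' + g'$ evaluated at each branch (favorable), the $-g'$ terms, and the $g$ terms, and check that the resulting coefficients are nonnegative. This is where the dichotomy (a)/(b) in (H4)(iii) enters — the two alternative sets of inequalities presumably correspond to two ways of splitting the same messy coefficient into manifestly nonnegative pieces, and at each point $x$ at least one split works. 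Throughout, $C^2$ of $Lg$ follows from $S$ being piecewise $C^3$ and $g \in C^2$.

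The main obstacle will be the bookkeeping in the second inequality $(Lg)'' + (Lg)' < 0$: one must track three derivatives of two inverse branches, repeatedly eliminate $\phi_0$-quantities via $\phi_0' = 1+\phi_1'$, $\phi_0'' = \phi_1''$, $\phi_0''' = \phi_1'''$, and then recognize that the coefficient multiplying each favorable-sign factor is exactly one of the expressions appearing in (H4)(ii)--(iii) — including making the case split (a) vs.\ (b) work pointwise. A secondary point, handled after Theorem \ref{thm:onedec} is established, is the routine but necessary verification that $\tilde T$ from (\ref{tilde-t}) actually satisfies (H1)--(H4) with $a = 1$: here $\phi_1(x) = -\frac x2 + \xi(x)$, so $\phi_1'(x) = -\frac12 + \frac{x}{4\xi(x)}$, and one checks $1 + 2\phi_1' = \frac{x}{2\xi} > 0$, computes $\phi_1''$, $\phi_1'''$ explicitly from $\xi(x) = \sqrt{x^2/4+1}$, and confirms (H4)(ii) and one branch of (H4)(iii) hold for all $x > 0$. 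That last step is elementary calculus with $\xi$ but should be presented so the reader can check the signs.
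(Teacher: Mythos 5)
Your overall strategy is the same as the paper's: compute $(Lg)'$ and $(Lg)''+(Lg)'$ directly, eliminate the $\phi_0$-derivatives via (H3) (i.e.\ $\phi_0'=1+\phi_1'$, $\phi_0''=\phi_1''$, $\phi_0'''=\phi_1'''$), and check that (H4) forces the right signs, with the dichotomy (a)/(b) in (H4)(iii) corresponding — exactly as you guessed — to two alternative regroupings of the same expression, at least one of which works at each point. You also correctly isolate the central difficulty: only $g''+g'<0$ is known, not the sign of $g''$.

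However, the specific bookkeeping device you propose — writing $(Lg)'$ and $(Lg)''+(Lg)'$ as positive combinations of the \emph{pointwise} quantities $-g'(\phi_j)$ and $-\bigl(g''(\phi_j)+g'(\phi_j)\bigr)$ — does not go through as stated. The obstruction is the $\phi_1$-branch: the term $-(\phi_1')^2\,g''(\phi_1)$ in $(Lg)'$ would have to be rewritten as $-(\phi_1')^2\bigl[g''(\phi_1)+g'(\phi_1)\bigr]+(\phi_1')^2 g'(\phi_1)$, and the first piece is a \emph{negative} coefficient times a negative quantity, i.e.\ an uncontrolled positive contribution. The hypothesis $g''+g'<0$ is a differential inequality and must be exploited \emph{between} the two branch points, not at each of them separately. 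The missing ingredient is the integral representation of cross-branch differences,
\[
  g(\phi_0(x))-g(\phi_1(x))=\int_{\phi_1(x)}^{\phi_0(x)}g'(t)\,dt,
  \qquad
  g'(\phi_0(x))-g'(\phi_1(x))=\int_{\phi_1(x)}^{\phi_0(x)}g''(t)\,dt ,
\]
which is legitimate since $\phi_0(x)\ge a\ge\phi_1(x)$. With these, the $g''$-contribution to $(Lg)'$ collapses to the single term $(\phi_1')^2\int_{\phi_1}^{\phi_0}g''$, and applying $g''<-g'$ \emph{under the integral} yields
$\phi_1''\int g'+(\phi_1')^2\int g''<\bigl[\phi_1''-(\phi_1')^2\bigr]\int_{\phi_1}^{\phi_0}g'<0$ by (H4)(ii), the remaining term being $g'(\phi_0)\,(1+2\phi_1')<0$ by (H4)(i). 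The same device is what makes the coefficients in the second inequality match the expressions in (H4)(iii)(a) and (b). So: right architecture, one genuinely missing idea without which the sign analysis for the decreasing branch cannot be closed. The deferral of the verification that $\tilde T$ satisfies (H1)--(H4) is appropriate; that is a separate (elementary but nontrivial) computation, the delicate part being (H4)(iii), which holds only as the union of the two regimes (a) and (b) over different ranges of $x$.
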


\begin{proof}
The $C^2$-regularity and the positivity of $Lg$ for any $g\in \mathcal{F}$  
follow easily from the definition of the transfer operator. To verify the last two 
conditions we first write, using (H3),
\begin{equation} \label{der-prima}
\begin{split}
  (Lg)'(x) &=  \phi''_{0}(x) \, g(\phi_{0}(x)) - \phi''_{1}(x) \, g(\phi_{1}(x)) \\[6pt]
    &\qquad + (\phi'_{0}(x))^2 \, g'(\phi_{0}(x)) - (\phi'_{1}(x))^2 \, g'(\phi_{1}(x)) 
    \\[2pt]
  &= \phi''_{1}(x) \int_{\phi_1(x)}^{\phi_0(x)} \!\!\ g'(t)\, dt + g'(\phi_{0}(x))\, 
  (1+2\phi'_{1}(x)) \\
    &\qquad + (\phi'_{1}(x))^2 \, \Big[  g'(\phi_{0}(x)) - g'(\phi_{1}(x)) \Big] \\
  &= \phi''_{1}(x) \int_{\phi_1(x)}^{\phi_0(x)} \!\! g'(t)\, dt + (\phi'_{1}(x))^2 
  \int_{\phi_1(x)}^{\phi_0(x)} \! g''(t)\, dt \\[2pt]
    &\qquad + g'(\phi_{0}(x))\, (1+2\phi'_{1}(x))
\end{split}
\end{equation}
and
\begin{equation} \label{der-seconda}
\begin{split}
  (Lg)''(x) &= \phi'''_{1}(x) \int_{\phi_1(x)}^{\phi_0(x)} \!\! g'(t)\, dt + 
  3\, \phi''_1(x)\, \phi'_0(x)\, g'(\phi_{0}(x)) \\[2pt]
    &\qquad - 3\, \phi''_1(x)\, \phi'_1(x)\, g'(\phi_{1}(x))  
   + (\phi'_{0}(x))^3 g''(\phi_{0}(x)) \\[6pt]
   &\qquad -(\phi'_{1}(x))^3 \, g''(\phi_{1}(x)) .
\end{split}
\end{equation}

Now we prove that $(Lg)'(x) <0 $ for all $x\in \bR^+$. From \eqref{der-prima}, 
since $g''+g'<0$, we have
\[
(Lg)'(x) < \left[ \phi''_{1}(x) -  (\phi'_{1}(x))^2 \right] \int_{\phi_1(x)}^{\phi_0(x)}
\!\! g'(t)\, dt + g'(\phi_{0}(x))\, (1+2\phi'_{1}(x)).
\]
The claim follows from (H4)(i)-(ii) and the hypothesis $g'<0$. 

To show that $(Lg)''+(Lg)' <0 $ we begin from \eqref{der-seconda} and, 
using different arguments, we obtain the result for all $x\in \bR^+$ for 
which (H4)(iii) (a) or (b) are satisfied. In the case (a), we use $g''+g'<0$ 
to write
\begin{equation} \label{pippo}
\begin{split}
  &(Lg)''(x) + (Lg)'(x) = \left[ \phi'''_{1}(x) + \phi''_{1}(x) \right] 
  \int_{\phi_1(x)}^{\phi_0(x)} \!\! g'(t)\, dt \\
    &\qquad \qquad + (\phi'_{0}(x))^3 g''(\phi_{0}(x)) -(\phi'_{1}(x))^3 
    g''(\phi_{1}(x)) \\[6pt]
    &\qquad \qquad + \left[ 3\, \phi''_1(x)\, \phi'_0(x) + (\phi'_{0}(x))^2 \right] 
    g'(\phi_{0}(x)) \\[6pt]
    &\qquad \qquad - \left[ 3\, \phi''_1(x)\, \phi'_1(x) + (\phi'_{1}(x))^2 \right] 
    g'(\phi_{1}(x)) \\
  &\qquad < \left[ \phi'''_{1}(x) + \phi''_{1}(x) \right] \int_{\phi_1(x)}^{\phi_0(x)}
  \!\!\ g'(t)\, dt \\
    &\qquad \qquad + \left[ 3\, \phi''_1(x)\, \phi'_0(x) + (\phi'_{0}(x))^2 - 
    (\phi'_{0}(x))^3 \right] g'(\phi_{0}(x)) \\[6pt]
    &\qquad \qquad + \left[ -3\, \phi''_1(x)\, \phi'_1(x) - (\phi'_{1}(x))^2 + 
    (\phi'_{1}(x))^3 \right] g'(\phi_{1}(x)) .
\end{split}
\end{equation}
Let $x$ satisfy (H4)(iii)(a). We argue that all three terms in the 
last expression are negative. The first term is negative by the first condition 
of (H4)(iii)(a) and the fact that $g'<0$. The third term is negative by
the second condition of (H4)(iii)(a) and the inequalities $\phi'_1<0$ 
and $g'<0$. For the second term we use (H3) to get
\[
3\, \phi''_1(x)\, \phi'_0(x) + (\phi'_{0}(x))^2 - (\phi'_{0}(x))^3 = \phi'_0(x)
\left[ 3\, \phi''_1(x) - (\phi'_{1}(x))^2 - \phi'_1(x) \right] .
\]
Since $\phi'_0>0$ and $\phi'_1<0$, using the second condition
of (H4)(iii)(a), we obtain
\[
3\, \phi''_1(x)\, \phi'_0(x) + (\phi'_{0}(x))^2 - (\phi'_{0}(x))^3 > 
-2\, \phi'_0(x)\, \phi'_1(x) >0.
\]
Together with $g'<0$, this shows that the second term of
\eqref{pippo} is negative. In conclusion, $(Lg)''(x) + (Lg)'(x) <0$.

We now show that we obtain the same conclusion for all $x$ for 
which (H4)(iii)(b) is satisfied. Adding to \eqref{der-seconda} the
last expression of \eqref{der-prima}, we have
\begin{align*}
  &(Lg)''(x) + (Lg)'(x) = \left[ \phi'''_{1}(x) + \phi''_{1}(x) \right] 
  \int_{\phi_1(x)}^{\phi_0(x)} \!\! g'(t)\, dt \\
    &\qquad \qquad + (\phi'_{0}(x))^3 g''(\phi_{0}(x)) -(\phi'_{1}(x))^3 
    g''(\phi_{1}(x)) \\[8pt]
    &\qquad \qquad +3\, \phi''_1(x)\, \phi'_0(x)\, g'(\phi_{0}(x)) - 3\, 
    \phi''_1(x)\, \phi'_1(x)\, g'(\phi_{1}(x)) \\
    &\qquad \qquad +(\phi'_{1}(x))^2 \int_{\phi_1(x)}^{\phi_0(x)} \!\!\ 
    g''(t)\, dt + g'(\phi_{0}(x))\, (1+2\phi'_{1}(x)) .
\end{align*}
Using again the hypothesis $g''+g'<0$ we can write
\begin{align*}
  &(Lg)''(x) + (Lg)'(x) < \left[ \phi'''_{1}(x) + \phi''_{1}(x) - (\phi'_{1}(x))^2 \right] 
  \int_{\phi_1(x)}^{\phi_0(x)} \!\!\ g'(t)\, dt \\
    &\qquad \qquad + \left[ -3\, \phi''_1(x)\, \phi'_1(x) + (\phi'_{1}(x))^3 \right] 
    g'(\phi_{1}(x)) \\[8pt]
    &\qquad \qquad + \left[ 3\, \phi''_1(x)\, \phi'_0(x) + 1+2\phi'_1(x) - 
    (\phi'_{0}(x))^3 \right] g'(\phi_{0}(x)) .
\end{align*}
The first term on the above right hand side is negative by (H4)(iii)(b) 
and the inequality $g'<0$. The second term is negative because
 $\phi'_1<0$, $g'<0$ and, by (H4)(ii), $3\phi''_1(x) - (\phi'_{1}(x))^2
 > 2(\phi'_{1}(x))^2>0$. As for the third term we use once more that
 $g'<0$. Furthermore, via (H3),
\begin{align*}
  &3\, \phi''_1(x)\, \phi'_0(x) + 1+2\phi'_1(x) - (\phi'_{0}(x))^3 \\
  &\quad = \phi'_0(x) \left[ 3\, \phi''_1(x) -1  -2 \phi'_1(x) -(\phi'_{1}(x))^2 \right] + 
  1+2\phi'_1(x) \\
  &\quad = \phi'_0(x) \left[ 3\, \phi''_1(x) -(\phi'_{1}(x))^2 \right] + 
  (1+2\phi'_1(x))\, (1-\phi'_0(x)).
\end{align*}
The last expression above is positive because of the conditions
(H4)(i)-(ii) and the fact that $\phi'_0\in (0,1)$. This concludes the
proof that $(Lg)''(x) + (Lg)'(x) <0$ for all $x$ that satisfy (H4)(iii)(b). 
\end{proof}

In order to prove Lemma \ref{th:lemma} it remains to verify that
the map $\tilde T$ defined in \eqref{tilde-t} satisfies the assumptions of 
Theorem \ref{thm:onedec}.

First of all, $\tilde T$ is piecewise $C^3$ with respect to the partition 
$I_0=[1,+\infty)$ and $I_1=(0,1]$ and (H1) is evidently satisfied. 
Moreover we have
\[
\phi_{0}(x) = \frac{x+\sqrt{x^2+4}}{2} , \qquad \phi_{1}(x) = 
\frac{-x+\sqrt{x^2+4}}{2} ,
\] 
whence
\begin{align*}
\phi'_{0}(x) = \frac 12 + \frac{x}{2\, \sqrt{x^2+4}} , \quad & \quad \phi'_{1}(x) = 
-\frac 12 + \frac{x}{2\, \sqrt{x^2+4}} , \\
\phi''_{0}(x) = \phi''_{1}(x) &= \frac{2}{(x^2+4)^{\frac 32}} , \\
\phi'''_{0}(x) = \phi'''_{1}(x) &= -\frac{6\, x}{(x^2+4)^{\frac 52}} .
\end{align*}
From these expressions we can immediately verify (H2) and (H3). Moreover
\[
1+2\phi'_{1}(x) = \frac{x}{\sqrt{x^2+4}}  
\]
gives (H4)(i). Now let us compute
\[
\phi''_{1}(x) - (\phi'_{1}(x))^2 = -\frac{1}{4\, (x^2+4)^{\frac 32}} \left[ \sqrt{x^2+4}
\left( \sqrt{x^2+4} - x \right)^2 \!- 8 \right].
\]
One easily checks that
\begin{align*}
  & \frac{d}{dx} \left[ \sqrt{x^2+4} \left( \sqrt{x^2+4} - x \right)^2 \!- 8 \right] \\
  &\qquad = \left( 3x\, \sqrt{x^2+4} - 3x^2-8 \right) \frac{\sqrt{x^2+4}-x}
  {\sqrt{x^2+4}}  <0,
\end{align*}
for $x>0$. Hence, for all $x>0$,
\[
\sqrt{x^2+4} \left( \sqrt{x^2+4} - x \right)^2 \!- 8 < 0,
\]
which implies (H4)(ii).

In order to check assumption (H4)(iii), define:
\begin{align*}
  A_1 &:= \left\{ x\in \bR^+\, :\, \phi'''_1(x) + \phi''_1(x)>0\right\} ; \\
  A_2 &:= \left\{ x\in \bR^+\, :\, 3\, \phi''_{1}(x) - (\phi'_{1}(x))^2 + \phi'_1(x)> 0
    \right\} ; \\
  B &:= \left\{ x\in \bR^+\, :\, \phi'''_1(x) + \phi''_1(x) - (\phi'_{1}(x))^2 >0 
  \right\} .
\end{align*}
Clearly (H4)(iii) is satisfied if and only if
\[
\left( A_1\cap A_2 \right) \cup B = \bR^+.
\]
Also notice that it follows immediately from the definitions that 
$B \subseteq A_1$. Let us determine $A_1, A_2$ and $B$ in our case. 

First of all  $A_1 = \bR^+$ because
\[
\phi'''_1(x) + \phi''_1(x) = \frac{2\, (x^2-3x+4)}{(x^2+4)^{\frac 52}}.
\]

As for $A_2$,
\[
3\, \phi''_{1}(x) - (\phi'_{1}(x))^2 + \phi'_1(x) = \frac{ (x^2+4)^{\frac 32} -
\sqrt{x^2+4} \left( 2\, \sqrt{x^2+4}-x \right)^2 \!+ 24 }{ 4\, (x^2+4)^{\frac 32} }.
\]
Elementary calculus shows that the function in the numerator 
vanishes at $x=0$, is increasing up to $x = \sqrt{ \frac{7\sqrt{33}-9}{12} }$ 
and is decreasing afterwards. Evaluation at integers proves that the 
numerator is positive at least up to $x=5$, hence $(0,5) \subset A_2$. 
(Numerical computations give the approximate result $A_2=(0,x_1)$, 
with $x_1 \simeq 5.25166$.) Since $A_1 = \bR^+$ we conclude that
\begin{equation} \label{a-uffa}
(0,5) \subset A_1 \cap A_2.
\end{equation}

We now study $B$.
\begin{align*}
  & \phi'''_1(x) + \phi''_1(x) - (\phi'_{1}(x))^2 \\[4pt]
  &\qquad = \frac{ 8(x^2+4)-24x-(x^2+4)^{\frac 32} \left( \sqrt{x^2+4}-x \right)^2 } 
  { 4\, (x^2+4)^{\frac 52} } \\
  &\qquad = \frac{ (x^5+8x^3+4x^2+4x+16) - \sqrt{x^2+4}\, (x^4+6x^2+8) }
  { 2\, (x^2+4)^{\frac 52} }.
\end{align*}
It follows that
\[
\lim_{x\to +\infty} \left[ \phi'''_1(x) + \phi''_1(x) - (\phi'_{1}(x))^2 \right] = 
+\infty
\]
and that, for $x\in \bR^+$, the zeroes of $\phi'''_1(x) + \phi''_1(x) - 
(\phi'_{1}(x))^2$ are the same as the roots of the polynomial
\[
p(x) := 2x^6-7x^5+24x^4-56x^3+72x^2-76x+32.
\]
By applying synthetic substitution, we obtain
\[
p(x) = (x-4)\, (2x^5+x^4+28x^3+56 x^2+ 296x+1108) + 4464,
\]
which proves that the largest positive root of $p(x)$ is smaller than 4. 
This implies that
\begin{equation} \label{b-uffa}
(4,+\infty) \subset B.
\end{equation}
(Numerical computations show that $B = (0,x_2) \cup (x_3,+\infty)$, 
with $x_2 \simeq 0.690123$ and $x_3 \simeq 1.93158$.)

Inclusions \eqref{a-uffa} and \eqref{b-uffa} show that 
$\left( A_1\cap A_2 \right) \cup B = \bR^+$, which gives (H4)(iii)
and ends the proof of Lemma \ref{th:lemma}.
\qed

\end{document}